\documentclass [11pt] {amsart}
\usepackage{bbm}
\usepackage{amssymb}
\usepackage{epic}
\usepackage{tikz}
\numberwithin{equation}{section}
\newtheorem{theorem}{Theorem}[section]

\newtheorem{lemma}[theorem]{Lemma}
\newtheorem{proposition}[theorem]{Proposition}
\newtheorem{corollary}[theorem]{Corollary}

\theoremstyle{definition}
\newtheorem{definition}[theorem]{Definition}
\newtheorem*{defn}{Definition}

\newtheorem{remark}[theorem]{Remark}

\usepackage{color, colortbl}

\title{$2$-rotundity of some nonseparable\\ abstract interpolation spaces}
\author[Stephen Dilworth]{Stephen Dlworth}
\address{Department of Mathematics, University of South Carolina, Columbia,
SC 29208, USA}
\email{dilworth@math.sc.edu}

\author{Denka Kutzarova}
\address{Department of Mathematics, University of Illinois Urbana-Champaign,
Urbana, IL 61807, USA; Institute of Mathematics and Informatics, Bulgarian Academy of Sciences, Sofia, Bulgaria}
\email{denka@illinois.edu}
\thanks{ The first author was supported by Simons Foundation Collaboration Grant No. 849142.
 The second author was supported by Simons Foundation Collaboration Grant No. 636954.}

\begin{document}
\maketitle

\begin{abstract}
Generalizing a construction due to Argyros and Motakis \cite{AM}, we  define a nonseparable abstract  interpolation space  associated to any given  reflexive space with an unconditional basis together with 
  an increasing  sequence of compact families of finite subsets of an uncountable set.  Under a mild complexity assumption on the families, we prove that the interpolation
space admits a $2$-rotund norm with an uncountable  $1$-unconditional basis.
\end{abstract}

\section{Introduction}\label{sec:Intro}

The  notions of $2$-rotund and weakly $2$-rotund norms   were introduced by Milman \cite{M} and are defined as follows.

 \begin{defn}   Let $X$ be a Banach space. We say that a norm $\|\cdot\|$ on $X$ is $2$-rotund  ($2R$) (resp.\, weakly $2$-rotund ($W2R$))   if for every $(x_n) \subset X$ such that $\|x_n\| \le 1$ ($n \ge 1$) and
$$ \lim_{m,n \rightarrow \infty} \|x_m + x_n\| = 2,$$
there exists $x \in X$ such that $x = \lim_{n \rightarrow \infty} x_n$ strongly (resp.\, weakly).
\end{defn}

It follows from a characterization of reflexivity due to  James \cite{J} that if  $X$ admits an equivalent $W2R$ norm then $X$ is reflexive. H\'ajek and Johanis proved the converse:   every reflexive Banach space admits an equivalent $W2R$ norm \cite{HJ}. Odell and Schlumprecht \cite{OS} proved that every separable   reflexive Banach space $X$ admits an equivalent $2R$ norm (cf.\,\cite{G}). However,  it is an open question whether every  reflexive Banach space admits an equivalent $2R$ norm.

In \cite{DK}, motivated by Schreier hierarchy introduced by Alspach and Argyros in \cite{AA} and a family of sets introduced by Benyamini and Starbird in \cite{BS}, we presented a general method for defining, for each countable ordinal $\alpha$, a family $\mathcal{F}_\alpha$ for certain uncountable sets $\Gamma$. The construction is similar to that of the transfinitely defined families introduced by Argyros and Motakis in \cite{AM}. We proved that the generalized Baernstein space $B(\mathcal{F}_\alpha)$ has an equivalent $2R$ norm. (Baernstein defined his space in \cite{B}.) As an application, we obtained that the nonseparable reflexive space in \cite{KT} has an equivalent $2R$ norm. We do not know if the same is true for $B(\mathcal{F})$ for arbitrary compact hereditary family $\mathcal{F}$ on $\Gamma$.

Inspired by a paper of Figiel and Johnson \cite{FJ}, we considered in \cite{DKM}  reflexive spaces with a (countable) unconditional (resp.\ symmetric) basis and asked if they admit a $2R$ 1-unconditional (resp.\ 1-symmertic) norm. Modifying the proof of Odell and Schlumprecht we obtained a positive result for 1-unconditionality, which we shall use in the present paper.

Tsirelson space \cite{T} is one of the most important spaces in the theory of Banach spaces. L\'{o}pez-Abad and Todorcevic \cite{LAT} showed that in the nonseparable setting there is no Tsirelson type space such that the norm satisfies an implicit formula in the spirit of \cite{FJ}. In \cite{AM}, Argyros and Motakis provided examples of nonseparable spaces that have some of the essential properties of Tsirelson space, e.g. being reflexive and admitting only $\ell_1$ as a spreading model. They used the construction inspired by the real interpolation method that appears in \cite{DFJP}, also in \cite{FJ}.

In the present paper we show that the nonseparable reflexive spaces in the spirit of Argyros and Motakis admit an equivalent $2R$ norm. The main result is proved in Section \ref{sec:2Rrenorm}. In Section \ref{sec:Refl} we prove reflexivity of the spaces in question under a certain assumption.  For the main result we apply an inductive approach in constructing the $2R$ norm, which takes into account the complexity of the corresponding underlying families of sets.

\section{Preliminary results}\label{sec:PrRes}
We shall use the following  characterization of $2$-rotundity (see e.g., \cite[II.6.4]{DGZ} or \cite{HJ}): $\|\cdot\|$ is a $2R$ norm on $X$  if for all  $(x_n)_{n=1}^\infty \subset X$ such that \begin{equation} \label{eq: alternativedef}
\lim_{m,n \rightarrow \infty} [\| x_m + x_n\|^2 - 2(\|x_m\|^2 + \|x_n\|^2)] = 0, \end{equation}
 there exists $x \in X$ such that $x=\lim_{n \rightarrow \infty} x_n$ strongly.

Day \cite{D}  introduced the  norm $\|\cdot \|_{\textrm{Day}}$ on $c_0$  defined by
$$\|(a_n)_{n=1}^\infty\|_{\textrm{Day}} = (\sum_{n=1}^\infty 4^{-n} a_n^{*2})^{1/2},
$$
where $(a_n^*)_{n=1}^\infty$ is the non-increasing rearrangement of $(|a_n|)_{n=1}^\infty$.
Let $(Y,\|\cdot\|)$ be a reflexive Banach space with normalized basis $(e_n)_{n=1}^\infty$. We define an equivalent norm
on   $Y$ thus:
\begin{equation}\label{eq: w2Rrenorming}   |\!|\!|\sum_{n=1}^\infty a_n e_n  \  |\!|\!| =( \|\sum_{n=1}^\infty a_ne_n\|^2 +  \|(a_n)_{n=1}^\infty\|_{\textrm{Day}}^2)^{1/2} .
\end{equation}

 We will use the  following result of  H\'ajek and Johanis.  It is a consequence of Theorem~3 and Corollary~4 of \cite{HJ} and the reflexivity of $Y$. (Here $\|\sum_{n=1}^\infty a_n e_n\|_\infty  = \sup_{n \ge 1} |a_n|$ as usual.)

\begin{theorem}  \label{lem: HJresult}
Suppose $(y_n)_{n=1}^\infty \subset Y$ satisfies
\begin{equation} \label{eq: triplenormcondition_2}
 \lim_{m,n \rightarrow \infty}[ |\!|\!|y_n + y_m |\!|\!|^2- 2(|\!|\!|y_n|\!|\!|^2 + |\!|\!|y_m|\!|\!|^2) ]= 0.
 \end{equation}
Then there exists $y \in Y$ such that
\begin{equation*}
 y_n\rightarrow y\quad \text{weakly as $n \rightarrow \infty$}
\end{equation*} and
$$
\lim_{n \rightarrow \infty} \|y_n - y\|_\infty = 0.
$$
\end{theorem}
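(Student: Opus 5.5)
The plan is to prove the statement directly rather than quote \cite{HJ}. It rests on the fact that the square of each summand in the definition of $|\!|\!|\cdot|\!|\!|$ is a convex function. First I would show that condition \eqref{eq: triplenormcondition_2} splits into the same condition for each summand. Put $f(y)=\|y\|^2$ and $g(y)=\|y\|_{\textrm{Day}}^2$. For any convex norm-square $h$ one has
$$2h(u)+2h(v)-h(u+v)\ge 0,$$
since $h(u+v)\le (\|u\|+\|v\|)^2\le 2\|u\|^2+2\|v\|^2$. Because $|\!|\!|y|\!|\!|^2=f(y)+g(y)$, the quantity $2(|\!|\!|y_n|\!|\!|^2+|\!|\!|y_m|\!|\!|^2)-|\!|\!|y_n+y_m|\!|\!|^2$ is a sum of two nonnegative terms. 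So \eqref{eq: triplenormcondition_2} gives $\lim_{m,n}[2f(y_n)+2f(y_m)-f(y_n+y_m)]=0$, and likewise for $g$.

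Next comes boundedness and convergence of the norms. Taking $m=n$ is not allowed in the double limit, so instead I use the elementary inequality
$$2\|u\|^2+2\|v\|^2-\|u+v\|^2\ge (\|u\|-\|v\|)^2 .$$
It shows that $(\|y_n\|)$ and $(\|y_n\|_{\textrm{Day}})$ are Cauchy, hence bounded and convergent, say to $r$ and $d$. Then $\|y_n+y_m\|\to 2r$, and the same holds for the Day norm with limit $2d$.

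The weak convergence step uses reflexivity of $Y$ together with the boundedness just obtained. Every subsequence of $(y_n)$ has a further weakly convergent subsequence, and it suffices to show that all weak cluster points coincide. The coordinate functionals $e_k^*$ are continuous, so weak convergence gives coordinatewise convergence. Distinct weak limits $y\neq y'$ would therefore differ in some coordinate $k$. Hence it is enough to prove that the coordinate sequence $(e_k^*(y_n))_n$ is Cauchy for every $k$. This also gives the uniform statement, since I will in fact prove $\sup_k|e_k^*(y_n)-e_k^*(y)|\to 0$.

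This is the main obstacle, and it is where the Day norm is used: it is the \emph{locally uniformly rotund}-type behaviour of Day's norm on $c_0$ that yields $\ell_\infty$ convergence. I would first reduce to $d>0$; the case $d=0$ is trivial, since $\|\cdot\|_\infty\le 2\|\cdot\|_{\textrm{Day}}$. I would try to prove the following lemma for $c_0$ with the Day norm: if $\|a^{(n)}\|_{\textrm{Day}}\to d$ and $\|a^{(n)}+a^{(m)}\|_{\textrm{Day}}\to 2d$, then $(a^{(n)})$ is Cauchy in $\ell_\infty$. The idea is an induction on the rank $j$ in the decreasing rearrangement.
\begin{itemize}
\item Near-equality in the weighted $\ell_2$ sum forces $(a^{(n)}_1)^{*}$ and $(a^{(n)}+a^{(m)})^{*}_1$ to be asymptotically $2(a^{(n)})^*_1$.
\item This forces the indices realising the largest entries to eventually agree, with the signs and values of those entries close.
\item Then peel off that coordinate and repeat, using the weights $4^{-j}$ to control the error at each step.
\item Entries below a threshold $\varepsilon$ are handled by the fact that only finitely many coordinates exceed $\varepsilon$, and their number is bounded uniformly in $n$ by the Day norm bound.
\end{itemize}
Granting this lemma, $(y_n)$ is $\|\cdot\|_\infty$-Cauchy. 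Its limit must be the coordinatewise limit of $(y_n)$, which equals $e_k^*(y)$ at every weak cluster point $y$. Hence the weak cluster point is unique, $y_n\to y$ weakly, and $\|y_n-y\|_\infty\to 0$.
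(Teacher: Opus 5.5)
\textbf{There is a genuine gap: the central $c_0$ lemma is false.} Your preliminary reductions are correct: the splitting of the defect into the $\|\cdot\|$-part and the Day part, the convergence of $\|y_n\|$ and $\|y_n\|_{\textrm{Day}}$, and the reduction of weak convergence to identifying weak cluster points. The lemma on which everything rests is not.

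Take $a^{(n)}=1_{\{1,\dots,n\}}$. Then $\|a^{(n)}\|_{\textrm{Day}}^2=\sum_{k\le n}4^{-k}\to 1/3$. For $m>n$,
\[
\|a^{(n)}+a^{(m)}\|_{\textrm{Day}}^2=4\sum_{k\le n}4^{-k}+\sum_{n<k\le m}4^{-k}\to 4/3 .
\]
So your hypotheses hold with $d=1/\sqrt3$, yet $\|a^{(n)}-a^{(m)}\|_\infty=1$ for $m\ne n$.

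There is also a structural reason the lemma must fail. Since $\|\cdot\|_{\textrm{Day}}$ is equivalent to $\|\cdot\|_\infty$ on $c_0$, your lemma would make Day's norm $2R$ on $c_0$, and $2R$ forces reflexivity.

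The sketch breaks at your last bullet. A bound on $\|a\|_{\textrm{Day}}$ gives no bound on $\#\{i\colon |a_i|\ge\varepsilon\}$, because the entry of rank $k$ carries weight only $4^{-k}$; indeed $\|1_{\{1,\dots,n\}}\|_{\textrm{Day}}<1/\sqrt3$ for every $n$. Peeling off ranks controls any fixed number of top ranks, but not uniformly in $n$. Nothing in the Day condition alone stops the mass from spreading over ever more coordinates of comparable size.

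\textbf{The missing idea.} Reflexivity of $Y$ must be used in the $\ell_\infty$ step itself, not only to produce cluster points.

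First, pass to a subsequence $y_{n_k}\to y$ weakly. The coefficient sequences $z_k$ then converge pointwise to $a=(e_i^*(y))_i\in c_0$. This is exactly what fails in the example above, whose pointwise limit is $(1,1,1,\dots)$.

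Next, for $\eta>0$ the set $E=\{i\colon |a_i|\ge\eta\}$ is finite, say $|E|=N$. Write $z_k=a+w_k$. By a gliding hump (passing to a further subsequence), the $w_k$ are disjointly supported off $E$ up to $\ell_\infty$-small errors.

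Now write $\|u\|_{\textrm{Day}}^2=\max_\sigma\sum_i4^{-\sigma(i)}u_i^2$ over injections $\sigma\colon\mathbb{N}\to\mathbb{N}$, and choose $\sigma$ optimal for $z_k+z_l$. The parallelogram law then gives
\[
2\|z_k\|_{\textrm{Day}}^2+2\|z_l\|_{\textrm{Day}}^2-\|z_k+z_l\|_{\textrm{Day}}^2\ge\sum_i4^{-\sigma(i)}(w_k-w_l)_i^2 .
\]
The largest entry of $|z_k+z_l|$ off $E$ can be given rank at most $N+1$. At that coordinate, one of $w_k,w_l$ is within $O(\eta)$ of $\max(\|w_k\|_\infty,\|w_l\|_\infty)$ and the other is negligible. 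So the right side is at least roughly $4^{-N-1}\max(\|w_k\|_\infty,\|w_l\|_\infty)^2$. Arguing by contradiction with $\eta$ small, this gives $\|w_k\|_\infty\to0$.

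Finally, uniqueness of the weak cluster point follows from strict convexity of Day's norm. Let $T$ be the injective coefficient map. Two cluster points $y,y'$ would give $\|Ty\|_{\textrm{Day}}=\|Ty'\|_{\textrm{Day}}=d$ and $\|Ty+Ty'\|_{\textrm{Day}}=2d$, so $Ty=Ty'$.

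The paper gives no argument of its own here. It quotes Theorem~3 and Corollary~4 of \cite{HJ} together with the reflexivity of $Y$, and that reflexivity is precisely the input your $c_0$ lemma leaves out.
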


For $K \ge 1$,  a basis $(e_n)_{n=1}^\infty$  is  $K$-unconditional if
$$
\|\sum_{n=1}^\infty \pm  a_n e_n\| \le K\|\sum_{n=1}^\infty a_n e_n\|
$$
for all scalars $(a_n)_{n=1}^\infty$ and all choices of signs. The basis is $K$-symmetric if
$$
\|\sum_{n=1}^\infty \pm  a_{\sigma(n)} e_n\| \le K\|\sum_{n=1}^\infty a_n e_n\|
$$
for all scalars $(a_n)_{n=1}^\infty$, all choices of signs, and all permutations $\sigma\colon \mathbb{N} \rightarrow \mathbb{N}$.

We refer the reader to \cite{LT1} for other  unexplained  Banach space  notation and terminology.

\begin{theorem}[\cite{DKM}]\label{t22}
Let $Y$ be a reflexive Banach space with an unconditional basis $(e_n)_{n=1}^\infty$. Then $Y$ admits an equivalent $1$-unconditional $2R$ norm.
\end{theorem}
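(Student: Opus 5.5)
First pass to a $1$-unconditional norm: set $\|x\|_u=\sup_{\pm}\|\sum \pm a_ne_n\|$. This is equivalent to the original norm because the basis is unconditional, and it is $1$-unconditional. Since $Y$ is reflexive, the basis $(e_n)$ is shrinking and boundedly complete, so $\ell_2$-type averaging arguments remain available. Following Odell and Schlumprecht, I would use a norm of the form
$$
|x|^2=\|x\|_u^2+\sum_{k=1}^\infty 2^{-k}\Bigl(\|P_kx\|_u^2+\|(I-P_k)x\|_u^2+\|x\|_{\mathrm{Day}}^2\Bigr),
$$
where $P_k$ is the basis projection onto $[e_1,\dots,e_k]$. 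Because $\|\cdot\|_u$ is $1$-unconditional, both $P_k$ and $I-P_k$ are contractions. Every summand is a lattice norm in the absolute values $|a_n|$, so $|\cdot|$ is again $1$-unconditional. It is also equivalent to $\|\cdot\|_u$, since the Day norm is dominated by the sup norm.

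Next, suppose $(x_n)$ satisfies (\ref{eq: alternativedef}) for $|\cdot|$. By the standard convexity argument, condition (\ref{eq: alternativedef}) passes to each squared summand separately. Thus the analogous condition holds for $\|\cdot\|_u$, for each $\|P_k\cdot\|_u$, for each $\|(I-P_k)\cdot\|_u$, and for the Day part, which means it holds for $|\!|\!|\cdot|\!|\!|$ built from $\|\cdot\|_u$. Theorem~\ref{lem: HJresult} then gives $y$ with $x_n\to y$ weakly and $\|x_n-y\|_\infty\to0$. In particular $P_kx_n\to P_ky$ in norm for each fixed $k$. Moreover the norms $|x_n|$ converge, since (\ref{eq: alternativedef}) forces $|x_n|-|x_m|\to0$.

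It remains to show that the tails $\|(I-P_k)x_n\|_u$ are uniformly small for large $k$ and large $n$. This is the main obstacle. Only weak convergence is available, and a nonvanishing tail mass could drift off to infinity. The tail terms in the norm are there to penalise exactly this. Suppose $x_n-y$ has a part $z_n$ of norm at least $\delta$ that is eventually supported beyond every fixed $k$. Pass to a subsequence and perturb so that the $z_n$ are essentially block disjoint. For $m\ne n$, the difference $z_m-z_n$ agrees with $z_m+z_n$ up to a sign change, so by $1$-unconditionality $\|z_m+z_n\|_u=\|z_m-z_n\|_u$. Parallelogram-type equality in the limit would then force $\|z_m+z_n\|_u\approx 2\|z_n\|_u$. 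At the same time $\|z_m - z_n\|_u\approx 2\|z_n\|_u$ and $\|(I-P_k)(x_m+x_n)\|_u$ splits well at a suitable cut $k$ between the supports. Comparing the contributions of the cut terms $\|P_k\cdot\|_u^2+\|(I-P_k)\cdot\|_u^2$ with index $k$ chosen between the supports of $z_m$ and $z_n$ should produce a uniform defect of order $2^{-k}\delta^2$. I would get a defect independent of $k$ by using the averaged weights, following the argument in \cite{DKM}. Such a defect contradicts (\ref{eq: alternativedef}).

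Once tails are controlled, $x_n\to y$ in norm, and $|\cdot|$ is $2R$.
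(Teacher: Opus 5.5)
Your preliminary steps are fine: passing to $\|\cdot\|_u$, splitting the defect in (\ref{eq: alternativedef}) among the squared summands, and invoking Theorem~\ref{lem: HJresult} to get $x_n\to y$ weakly with $\|x_n-y\|_\infty\to0$. The gap is the tail step, and it is fatal for the norm you chose.

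Your cut terms are weighted by the fixed position $k$ of the cut, and the weights must be summable for $|\cdot|$ to be equivalent. The escaping parts $z_n$ have supports running off to infinity, so any cut separating $z_m$ from $z_n$ has $k\to\infty$. The defect you extract, of order $2^{-k}\delta^2$, therefore tends to $0$ and contradicts nothing, and no averaging of summable weights changes this. Nor is $\|z_m+z_n\|_u\approx 2\|z_n\|_u$ contradictory: a reflexive norm can behave like $\ell_1^2$ on far-out disjoint pairs with small sup norm.

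Concretely, let $G_1<G_2<\cdots$ be successive intervals of $\mathbb{N}$ with $|G_i|=i$, let $a_i(x)=\sum_{n\in G_i}|x_n|$, and set
\[
\|x\|=\max\Bigl\{\Bigl(\sum_i a_i(x)^2\Bigr)^{1/2},\ \sup_{i\ne j}\bigl(a_i(x)+a_j(x)\bigr)\Bigr\}.
\]
This norm is $1$-unconditional, so $\|\cdot\|_u=\|\cdot\|$. It is $\sqrt2$-equivalent to the norm of $(\sum_i\ell_1^i)_{\ell_2}$, so $Y$ is reflexive. Put $z_i=i^{-1}1_{G_i}$. Then $\|z_i\|=1$, $\|z_i+z_j\|=2$ for $i\ne j$, and $\|z_i\|_\infty=1/i$. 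For $i<j$ the defect splits as follows:
\begin{itemize}
\item the $\|\cdot\|_u$-part is $0$;
\item the Day part is $O(i^{-2})$;
\item the cut term of index $k$ is $0$ unless $\min G_i\le k<\max G_j$, and at most $8$ in that range, so the cut terms contribute $O(2^{-\min G_i})$.
\end{itemize}
Hence $(z_i)$ satisfies (\ref{eq: alternativedef}) for $|\cdot|$. Yet it is weakly null with $|z_i|\ge 1$, so it has no norm limit, and $|\cdot|$ is not $2R$.

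What is missing is a position-independent mechanism that detects $\ell_1^2$-type behaviour of far-out disjoint vectors. You also need an argument that uses reflexivity to show this behaviour cannot persist. The paper does not prove this theorem; it quotes it from \cite{DKM}, where it comes from modifying the Odell--Schlumprecht renorming, which has to encode exactly this kind of asymptotic information.

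You can see the same idea in concrete form in the main theorem of Section~\ref{sec:2Rrenorm}. There the norms $|\!|\!|\cdot|\!|\!|_j$ apply the Day norm to $(\Theta_{F_i}(x))$ over disjoint $F_i\in\mathcal{F}_j$, with no dependence on position. The induction on $j$ through Lemmas~\ref{l23}--\ref{l25} then uses $\|\tilde x_n\|_{j-1}\to 0$ to rule out $\ell_1^2$-behaviour at level $j$. Deferring the decisive step to ``the argument in \cite{DKM}'' leaves your proof where it started.
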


Let $\Gamma$ be an infinite set. Throughout,  $\mathcal{F}$ denotes a collection of finite subsets of $\Gamma$ satisfying the following: \begin{itemize}
\item $\mathcal{F}$ contains all singletons;
\item $\mathcal{F}$ is hereditary, i.e., if $F \in \mathcal{F}$ and $G \subseteq F$ then $G \in \mathcal{F}$;
\item $\mathcal{F}$ is compact, i.e., $\{ 1_F \colon F \in \mathcal{F}\}$
is a compact subset of $\{0,1\}^{\Gamma}$ in the topology of pointwise convergence.
\end{itemize}

Now we shall define more specific families of sets satisfying the above three conditions.
\begin{itemize} \item
 Let  $S$ be any set of cardinality at least  $2$ and let $\overline{S} := S^{\mathbb{N}}$.  \label{sec: transfinite}  \item
For distinct $p = (p(i))_{i=1}^\infty \in \overline{S}$ and $q = (q(i))_{i=1}^\infty \in \overline{S}$, let
$d(p,q) = 1$ if $p(1) \ne q(1)$ and, for $k \ge 2 $, let $d(p,q) =  k$ if $p(k) \ne q(k)$ and $p(j) = q(j)$ for $1 \le j \le k-1.$
\item For $A \subset \overline{S}$, with $|A| \ge 2$, let $$A^\sharp = \min \{d(p,q) \colon p,q \in A, p \ne q\}.$$
We define, for each countable ordinal $\alpha$, a  hereditary family $\mathcal{F}_\alpha$ of finite subsets of  $\overline{S}$.
\item Let $$\mathcal{F}_0  = \{ \emptyset \} \cup \{\{p\} \colon p \in \overline{S}\}.$$
\item  If $k \ge 1$ and  $\mathcal{F}$ is any collection of finite subsets of $\overline{S}$ satisfying the conditions set out in the Introduction, let
$$\mathcal{F}^{(k)} = \mathcal{F}_0 \cup \{A \in \mathcal{F} \colon A^{\sharp} \ge k\}.$$
Note that since  $\mathcal{F}$ is hereditary,  $\mathcal{F}^{(k)}$ is also hereditary.
\item If $\alpha = \beta^+$ is a successor ordinal, let $\mathcal{F}_\alpha$ be any hereditary family satisfying the following:
\begin{itemize} \item $\mathcal{F}_\beta \subseteq \mathcal{F}_\alpha$.
\item If  $A \in \mathcal{F}_\alpha$ and  $|A| \ge 2$, then  there exist  $A_i \in \mathcal{F}_\beta$ ($1 \le i \le A^{\sharp}$) such that
$$ A = \cup_{i=1}^{A^{\sharp}} A_i.$$
\end{itemize}
\item If $\alpha$ is a limit ordinal, choose $\alpha_r \uparrow \alpha$ ($r \ge 1$) and define
$$ \mathcal{F}_\alpha = \cup_{r=1}^\infty \mathcal{F}^{(r)}_{\alpha_r}.$$ Note that, for each $k \ge 1$,
$$\mathcal{F}^{(k)}_\alpha = \cup_{r=1}^\infty \mathcal{F}^{(r \vee k)}_{\alpha_r},$$
where $r \vee k := \max(r,k)$.
\end{itemize}
The above construction includes the compact families from \cite{AM}. Next we show that this more general construction also produces compact families.

\begin{proposition}\label{t51}
The hereditary families $\mathcal{F}_\alpha$ are compact in the topology of pointwise convergence for all $\alpha$.
\end{proposition}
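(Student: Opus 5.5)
Since $\mathcal{F}_\alpha$ is hereditary, compactness in $\{0,1\}^{\overline{S}}$ is equivalent to the statement that $\mathcal{F}_\alpha$ contains no infinite strictly increasing chain $A_1 \subsetneq A_2 \subsetneq \cdots$. Indeed, a pointwise limit of indicators $1_{B_n}$ with $B_n \in \mathcal{F}_\alpha$ is $1_B$ for some $B \subseteq \overline{S}$. Every finite subset of $B$ is contained in $B_n$ for large $n$, hence lies in $\mathcal{F}_\alpha$ by heredity. So if $B$ is infinite we get an infinite increasing chain, and if $B$ is finite then $B \in \mathcal{F}_\alpha$. Thus I will prove by transfinite induction on $\alpha$ that every set all of whose finite subsets lie in $\mathcal{F}_\alpha$ is finite. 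Equivalently, for an infinite $B \subseteq \overline{S}$, some finite subset of $B$ lies outside $\mathcal{F}_\alpha$.

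The case $\alpha = 0$ is trivial. For the successor case $\alpha = \beta^+$, let $B$ be infinite. First I pass to a countable infinite subset and use the tree structure of $\overline{S}$ to control $\sharp$. Consider the branching: either infinitely many elements of $B$ differ already at coordinate $1$, or infinitely many share $p(1)$. Iterating, I obtain either a fixed level $k$ and an infinite $B' \subseteq B$ whose elements agree on coordinates $<k$ and pairwise differ at coordinate $k$ (so every finite subset of $B'$ with at least two points has $\sharp = k$), or an infinite sequence converging to a point in the product topology. In the second case a diagonal argument extracts a subsequence with pairwise distances $d$ tending to infinity (a ``comb''). Finite subsets $A$ of $B'$ then have $A^\sharp$ either constant equal to $k$, or equal to the $d$-value at their first split, which can be arranged to be controlled.

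In the first case, every finite $A \subseteq B'$ in $\mathcal{F}_\alpha$ is a union of at most $k$ sets from $\mathcal{F}_\beta$. By the induction hypothesis and the compactness of $\mathcal{F}_\beta$, a Ramsey/pigeonhole argument shows that a union of $k$ members of a compact hereditary family cannot cover arbitrarily large initial segments of $B'$. More precisely, colour by which piece each point falls in, and take a pointwise-convergent subsequence of the $k$-tuples $(1_{A_1},\dots,1_{A_k})$ covering initial segments of length $n$. The limit is $k$ sets whose union is $B'$, each of which has all its finite subsets in $\mathcal{F}_\beta$; one of them must be infinite, contradicting the induction hypothesis. In the comb case I order $B' = \{p_1,p_2,\dots\}$ so that $\{p_j : j \ge i\}$ all split from $p_i$ at level $d_i \uparrow \infty$. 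Since $\sharp$ only grows when we delete the early points, the first point alone spoils this. Instead I use the fact that $A = \{p_1,\dots,p_n\}$ has $A^\sharp = d_1$, a fixed number. The same $k$-colouring argument then applies with $k = d_1$. So in every case the bound on the number of pieces is uniform along a suitable chain, and the compactness limit argument goes through.

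For limit $\alpha$, let $B$ be infinite, reduced as above to $B'$ with a chain of finite sets $A_n \uparrow B'$ all lying in $\mathcal{F}_\alpha = \bigcup_r \mathcal{F}^{(r)}_{\alpha_r}$. Then $A_n^\sharp$ is eventually constant, equal to some $m$, because $A_n$ increases and $\sharp$ is nonincreasing under enlargement, while staying $\ge 1$. If $A_n \in \mathcal{F}^{(r_n)}_{\alpha_{r_n}}$ with $|A_n| \ge 2$, then $r_n \le m$. Hence infinitely many $A_n$ lie in a single $\mathcal{F}_{\alpha_r}$ with $r \le m$, and by heredity so do all of them. This contradicts the induction hypothesis for $\alpha_r < \alpha$. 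The main obstacle I expect is the successor step: making the limit argument with $A^\sharp$ pieces rigorous. The key point is that along an increasing chain $A^\sharp$ is nonincreasing, hence eventually constant. This gives a uniform number of pieces directly and removes the need for the comb case analysis.
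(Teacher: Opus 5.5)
Your proof is correct. It rests on the same key observation as the paper's: once two distinct points $p,q$ lie in all the sets under consideration, $A^\sharp\le d(p,q)$. This bounds the number of $\mathcal{F}_\beta$-pieces in the successor case and the index $r$ in the limit case.

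The framework differs, though.
\begin{itemize}
\item The paper works directly with a net $(F_u)$ in $\mathcal{F}_\alpha$. It passes to a subnet with $\{p,q\}\subseteq F_u$ and splits each $F_u$ into $d(p,q)$ pieces. It then uses compactness of $\mathcal{F}_\beta$ (resp.\ of the finite union $\bigcup_{r\le d(p,q)}\mathcal{F}^{(r)}_{\alpha_r}$) to put the limit back in $\mathcal{F}_\alpha$.
\item You first reduce compactness of a hereditary family to the statement that no infinite set has all its finite subsets in the family. This lets you work on a countable set $B'=\{p_1,p_2,\dots\}$, with $p_1,p_2$ playing the role of $p,q$. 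Subsequence extraction in $(\{0,1\}^{B'})^k$ is legitimate because $B'$ is countable.
\end{itemize}
Your route buys two small things. You never need nets. In the limit case, your pigeonhole-plus-heredity argument (infinitely many $A_n$ in one $\mathcal{F}_{\alpha_r}$, hence all of them) avoids a fact the paper uses tacitly: that each truncated family $\mathcal{F}^{(r)}_{\alpha_r}$ is closed in $\mathcal{F}_{\alpha_r}$, because $A^\sharp\ge r$ is a closed condition. The paper's version is shorter.

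Two presentational points.
\begin{itemize}
\item Since $\overline S$ is uncountable, $\{0,1\}^{\overline S}$ is not metrizable, so sequential closedness does not by itself give compactness. Your justification of the reformulation should therefore be phrased with nets (``eventually'' instead of ``for large $n$''). Alternatively, state it directly: $1_B$ lies in the closure of a hereditary $\mathcal{F}$ if and only if every finite subset of $B$ lies in $\mathcal{F}$. The argument goes through verbatim either way.
\item The branching/comb case analysis, including the confused remark about deleting early points, is superfluous and should be removed. For any enumeration of $B'$ one has $A_n^\sharp\le d(p_1,p_2)$ for all $n\ge 2$, and that is all either step uses. You do not even need $A_n^\sharp$ to be eventually constant.
\end{itemize}
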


\begin{proof} 
Let $(F_u)$ be a net in $\mathcal{F}_\alpha$. By passing to a subnet we may assume that either $F_u\to\emptyset$ or $F_u\to\{p\}$ or $\{p,q\}\subseteq F_u$ for all $u$, where $p,q\in\overline S$ and $p\ne q$. In particular, if $\alpha=0$ then either $F_u\to\emptyset$ or $F_u\to\{p\}$, so $\mathcal{F}_0$ is compact.

Now suppose the result holds for all $\gamma<\alpha$. 

First, suppose $\alpha=\beta^+$ is a successor ordinal. If $\{p,q\}\subseteq F_u$ for all $u$, then $F_u=\mathop{\cup}\limits_{i=1}^{d(p,q)}F_{u,i}$ for some $F_{u,i}\in \mathcal{F}_\beta$. We may assume that $(F_{u,i})_{i=1}^{d(p,q)}$ are disjoint for each $u$. By compactness of $\mathcal{F}_\beta$, passing to a subnet we may assume $F_{u,i}\to F_i\in\mathcal{F}_\beta$, $1\le i\le d(p,q)$. The limit sets will be eventually subsets of the net sets, so $F_i\in\mathcal{F}_\beta$, since $\mathcal{F}_\beta$ is hereditary. Thus, $\mathop{\cup}\limits_{i=1}^{d(p,q)}F_{i}$ will be eventually a subset of $F_u$, hence it is in $\mathcal{F}_\alpha$, since $\mathcal{F}_\alpha$ is hereditary.

Now suppose $\alpha$ is a limit ordinal. If $\{p,q\}\subseteq F_u$ for each $u$ then $F_u^\sharp \le d(p,q)$ and hence $F_u \in \mathop{\cup}\limits_{r=1}^{d(p,q)}\mathcal{F}_{\alpha_r}^{(r)}$, which is compact by  inductive hypothesis. So, passing to a subnet, $F_u\to F\in\mathop{\cup}\limits_{r=1}^{d(p,q)}\mathcal{F}_{\alpha_r}^{(r)}\subseteq\mathcal{F}_\alpha$. Thus $\mathcal{F}_\alpha$ is compact.
\end{proof}

For our main result we shall use only the particular case of $\mathcal{F}_j$, for integers $j\ge 0$.

Let $X_j$, $j\ge 0$, be the generalized Schreier space \cite{S} based on $\mathcal{F}_j$ with norm $\|\cdot\|_j$ for $x\in c_{00}(\overline{S})$, where $c_{00}(\overline{S})$ is the linear space of vectors with finitely many non-zero coordinates in $\overline{S}$,
$$
\|x\|_j=\sup_{F\in \mathcal{F}_j} \sum_{\gamma\in F} |x_\gamma|.
$$
$X_j$ is the closure of $c_{00}(\overline{S})$ in $\|\cdot\|_j$.

For a finite $F\subseteq \overline{S}$, and $x=(x_\gamma)\in c_{00}(\overline{S})$, denote
$$
\Theta_F(x)=\sum_{\gamma\in F}|x_\gamma|.
$$

Define an equivalent norm $|\!|\!|\cdot|\!|\!|_j$ on $X_j$ by
$$
|\!|\!|x|\!|\!|_j=\sup\|(\Theta_{F_i}(x))\|_{\textrm{Day}},
$$
where the supremum is taken over all disjoint $F_1,F_2,\dots,F_N$, $N\in\mathbb{N}$, in $\mathcal{F}_j$.

\begin{lemma}\label{l23}  Let $j \ge 1$.
Suppose $x\in X_j$, $x_n\in X_j$ ($n\ge 1$), $\|x_n\|_j>\delta>0$ for all $n\ge 1$, and $\|x_n\|_\infty\to 0$ as $n \rightarrow \infty$.. Then
$$
\lim_{n\to \infty}|\!|\!|x+x_n|\!|\!|_j>|\!|\!|x|\!|\!|_j.
$$
\end{lemma}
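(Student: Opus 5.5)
The plan is to exhibit, for all large $n$, a disjoint family in $\mathcal{F}_j$ whose Day-norm functional at $x+x_n$ exceeds $|\!|\!|x|\!|\!|_j$ by a fixed positive amount. The natural candidate is a near-optimal family for $x$, together with one new set $G_n$ that carries a large part of $x_n$ away from the essential support of $x$. The gain from adding one large coordinate to the Day norm must be bounded below independently of the number $N$ of sets in the near-optimal family. This is the delicate point, and I handle it first.

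\textbf{Step 1: a uniform gain.} Since $x\in X_j$ is in the closure of $c_{00}(\overline S)$, I fix a finite set $E_0\subseteq\overline S$ with $\|x 1_{\overline S\setminus E_0}\|_j<\delta/16$, and put $M=|E_0|$. If $F\in\mathcal{F}_j$ satisfies $\Theta_F(x)\ge\delta/8$, then $F$ meets $E_0$. Hence any disjoint family $F_1,\dots,F_N$ in $\mathcal{F}_j$ contains at most $M$ sets with $\Theta_{F_i}(x)\ge\delta/8$.

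Now take a finite nonincreasing sequence $a_1\ge a_2\ge\cdots$ in which at most $M$ terms are $\ge\delta/8$, and insert a value $b\ge\delta/4$. It lands at some position $k\le M+1$, and every later term $a_i$ with $i\ge M+1$ satisfies $a_i\le b/2$. Inserting $b$ shifts the later terms to smaller weights, so the squared Day norm changes by
$$4^{-k}b^2-\tfrac34\sum_{i\ge k}4^{-i}a_i^{2}.$$
Using $a_i\le b$ for $k\le i\le M$ and $a_i\le b/2$ for $i>M$, this is at least $c\,4^{-M}\delta^2$ for an absolute constant $c>0$. Call this gain $\kappa$. Day-norm monotonicity in each coordinate also lets me replace $b$ by any lower bound that is still $\ge\delta/4$.

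\textbf{Step 2: choosing the sets.} Fix $\eta>0$ small compared with $\kappa$. Choose disjoint $F_1,\dots,F_N\in\mathcal{F}_j$ with $\|(\Theta_{F_i}(x))\|_{\textrm{Day}}\ge|\!|\!|x|\!|\!|_j-\eta$, and let $E=E_0\cup\bigcup_iF_i$, which is a finite set.

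Because $\|x_n\|_\infty\to0$ and $E$ is finite, we get $\|x_n1_E\|_j\le|E|\,\|x_n\|_\infty\to0$. So for large $n$ we have $\|x_n1_{\overline S\setminus E}\|_j>\delta/2$. I then pick $G_n\in\mathcal{F}_j$ with $G_n\cap E=\emptyset$ and $\Theta_{G_n}(x_n)>\delta/2$; this is possible because $\mathcal{F}_j$ is hereditary. Since $G_n$ avoids $E_0$, we have $\Theta_{G_n}(x)<\delta/16$, and therefore
$$\Theta_{G_n}(x+x_n)>\delta/2-\delta/16\ge\delta/4.$$
For the old sets, $\Theta_{F_i}(x+x_n)\ge\Theta_{F_i}(x)-|E|\,\|x_n\|_\infty$, which is uniformly close to $\Theta_{F_i}(x)$ for large $n$. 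Since the Day norm is $1$-Lipschitz with respect to $\ell_\infty$ perturbations on $N$ coordinates (up to a constant), this costs at most $\eta$ for large $n$.

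\textbf{Step 3: conclusion.} The sets $F_1,\dots,F_N,G_n$ are disjoint members of $\mathcal{F}_j$. Combining the two steps gives
$$|\!|\!|x+x_n|\!|\!|_j^2\ge(|\!|\!|x|\!|\!|_j-2\eta)^2+\kappa.$$
Here I apply Step 1 to the sequence $(\Theta_{F_i}(x))$, whose large terms are controlled by $M$ as shown there. The small perturbation of the old coordinates is absorbed into the $\eta$ terms. Choosing $\eta$ small relative to $\kappa$ yields
$$\liminf_n|\!|\!|x+x_n|\!|\!|_j>|\!|\!|x|\!|\!|_j.$$
The main obstacle is Step 1: getting the gain independent of $N$. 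This is why $M$ must be fixed via $E_0$ before $\eta$ and the near-optimal family are chosen.
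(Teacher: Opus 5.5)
Your proof is correct and follows essentially the same route as the paper's. Both arguments approximate $x$ off a finite set $E$ to cap the number of large coordinates $\Theta_{F_i}(x)$, take a near-optimal disjoint family for $x$, and append one new set carrying mass of $x_n$; the Day-norm insertion gain is then of order $4^{-|E|}\delta^2$, independent of the size of the family. The differences are cosmetic: you keep $x_n$ on the old sets and absorb the $|E|\,\|x_n\|_\infty$ perturbation instead of passing to $y_n$, and you make $G_n$ avoid $E_0$, which gives $\Theta_{G_n}(x)<\delta/16$ directly rather than via the paper's claim that $\Theta_{F^n_{m+1}}(x)\to 0$.
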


\begin{proof}
Choose a finite set $E\subseteq \overline{S}$ such that
$$
\|x-x\cdot 1_E\|_j<\frac\delta2.$$

 Let $N:=|E|$.  Choose disjoint sets $F_1,\dots,F_m$ in $\mathcal{F}_j$ such that
$$
\|(\Theta_{F_i}(x))_{i=1}^m\|_{\textrm{Day}}^2>|\!|\!|x|\!|\!|_j^2-\frac{\delta^2}4 4^{-N-1}$$
Let $$y_n = x_n - x_n \cdot 1_{\cup_{i=1}^m F_i}.$$ Since $\|x_n\|_\infty \rightarrow 0$, it follows that
$\|x_n - y_n\|_j \rightarrow 0$. 
 Note that if $F \cap E = \emptyset$ then $\Theta_F(x)< \delta/2$.

Thus, since the $F_i$'s are disjoint, at most $N$ of them   can satisfy
$$\Theta_{F_i}(x) \ge \frac{\delta}{2}.$$
Moreover, since  $\|x_n\|_j>\delta$  it follows that for all sufficiently large $n$ there exists $F^n_{m+1}\in\mathcal{F}_j$ disjoint from $\mathop{\cup}\limits_{i=1}^m F_i$ such that $\Theta_{F^n_{m+1}}(y_n)>\delta$. Moreover, we may assume that
$$\lim_{n \rightarrow \infty}  \Theta_{F^n_{m+1}}(x) = 0$$ 
since $\|x\|_j < \infty$.   
Let $\varepsilon>0$. Then for all sufficiently large $n$,
\begin{align*} 
|\!|\!|x+y_n|\!|\!|_j^2\ge & \|(\Theta_{F_1}(x+y_n),\dots, \Theta_{F_m}(x+y_n),\Theta_{F^n_{m+1}}(x+y_n))\|_{\textrm{Day}}^2\\
&= \|(\Theta_{F_1}(x),\dots, \Theta_{F_m}(x),\Theta_{F^n_{m+1}}(x+y_n))\|_{\textrm{Day}}^2\\
&\ge  \|(\Theta_{F_1}(x),\dots, \Theta_{F_m}(x),\Theta_{F^n_{m+1}}(y_n))\|_{\textrm{Day}}^2 - \varepsilon\\
&\ge \|(\Theta_{F_i}(x))_{i=1}^m\|_{\textrm{Day}}^2 + \left(\delta^2-\frac{\delta^2}4\right)4^{-N-1}-\varepsilon.
\end{align*}The last line uses the fact that the $(N+1)^{\rm st}$ largest $\Theta_{F_i}(x)<\frac\delta2$, so replacing it by $\Theta_{F^n_{m+1}}(y_n)>\delta$ increases the square of the Day  norm by  at least $\left(\delta^2-\frac{\delta^2}4\right)4^{-N-1}$.

 Hence, for all sufficiently large $n$, \begin{align*}
|\!|\!|x+y_n|\!|\!|_j^2\ge &  \|(\Theta_{F_i}(x))_{i=1}^m\|_{\textrm{Day}}^2 + \left(\delta^2-\frac{\delta^2}4\right)4^{-N-1}-\varepsilon\\
&\ge |\!|\!|x|\!|\!|_j^2-\frac{\delta^2}4 4^{-N-1} + \left(\delta^2-\frac{\delta^2}4\right)4^{-N-1}-\varepsilon\\
&=  |\!|\!|x|\!|\!|_j^2+\frac{\delta^2}2 4^{-N-1} -\varepsilon. 
\end{align*} Finally, since $\|x_n - y_n\|_j \rightarrow 0$ and $\varepsilon>0$ is arbitrary, the required result follows.

\end{proof}

\begin{lemma}\label{l24}
Let $j\ge 1$. Suppose $z\in c_{00}(\overline{S})$, $x_n\in c_{00}(\overline{S})$, and $\|x_n\|_{j-1}\to 0$ as $n\to \infty$. Suppose $F\in\mathcal{F}_j$ satisfies $|F\cap\operatorname{supp}(z)|\ge 2$. Then $\lim\limits_{n\to\infty}\Theta_F(x_n)=0$ uniformly over all such $F$.
\end{lemma}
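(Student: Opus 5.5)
Since $z$ has finite support, the quantity $F^\sharp$ is bounded for every admissible $F$: if $p,q\in F\cap\operatorname{supp}(z)$ are distinct, then
$$F^\sharp\le d(p,q)\le M:=\max\{d(p,q)\colon p,q\in\operatorname{supp}(z),\ p\ne q\}.$$
This $M$ is finite and depends only on $z$. The plan is to use the defining property of the successor family $\mathcal{F}_j=\mathcal{F}_{(j-1)^+}$ to split $F$ into a bounded number of pieces from $\mathcal{F}_{j-1}$. Each piece is then controlled by $\|x_n\|_{j-1}$.

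First I would handle the case $j\ge 2$. The construction says: if $A\in\mathcal{F}_j$ and $|A|\ge 2$, then $A=\bigcup_{i=1}^{A^\sharp}A_i$ with $A_i\in\mathcal{F}_{j-1}$. Our $F$ has $|F|\ge 2$, so $F=\bigcup_{i=1}^{F^\sharp}F_i$ with $F_i\in\mathcal{F}_{j-1}$. By the hereditary property we may replace the $F_i$ by disjoint subsets, which still lie in $\mathcal{F}_{j-1}$. Then
$$\Theta_F(x_n)\le\sum_{i=1}^{F^\sharp}\Theta_{F_i}(x_n)\le F^\sharp\,\|x_n\|_{j-1}\le M\|x_n\|_{j-1},$$
and the right-hand side tends to $0$ independently of $F$. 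This gives the uniformity.

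The case $j=1$ needs a word, since $\mathcal{F}_1$ is the successor of $\mathcal{F}_0$. Here the same decomposition writes $F$ as a union of at most $F^\sharp$ singletons or empty sets, so $|F|\le F^\sharp\le M$. Meanwhile $\|x\|_0=\sup_\gamma|x_\gamma|=\|x\|_\infty$, so $\Theta_F(x_n)\le M\|x_n\|_\infty=M\|x_n\|_0\to 0$. This is the same estimate as before.

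The only point needing care is the observation that $F^\sharp$ is bounded by a constant depending only on $\operatorname{supp}(z)$. The role of the hypothesis $|F\cap\operatorname{supp}(z)|\ge 2$ is precisely to supply two distinct points of $F$ inside this fixed finite set. Everything else is direct from the definitions of $\mathcal{F}_j$ and $\|\cdot\|_{j-1}$.
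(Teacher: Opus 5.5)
Your proof is correct and follows the paper's argument: you bound $F^\sharp$ by $N=\max\{d(p,q)\colon p,q\in\operatorname{supp}(z),\ p\ne q\}$, split $F$ into at most $N$ sets from $\mathcal{F}_{j-1}$, and sum to get $\Theta_F(x_n)\le N\|x_n\|_{j-1}$ uniformly in $F$. Your separate treatment of $j=1$ is harmless but unnecessary, since $\mathcal{F}_1$ is a successor of $\mathcal{F}_0$ and $\|\cdot\|_0=\|\cdot\|_\infty$, so the general argument already covers that case.
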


\begin{proof}
Denote $N=\max\{d(p,q)\colon p,q\in\operatorname{supp}(z), p\ne q\}$. Then $F$ is of the form $F=\mathop{\cup}\limits_{i=1}^N F_i$, where $F_i\in\mathcal{F}_{j-1}$. So
$$
\Theta_F(x_n)\le\sum_{i=1}^N \Theta_{F_i}(x_n)\le N\|x_n\|_{j-1}\to 0
$$
uniformly over $F$ as $n\to\infty$.
\end{proof}

\begin{lemma}\label{l25} Let $j \ge 1$.
Suppose $x\in X_j$ and $x_n\in c_{00}(\overline{S})$ ($n\ge 1$), with $\operatorname{supp}(x_n)\cap\operatorname{supp}(x_m)=\emptyset$ if $m\ne n$, satisfy $\|x_n\|_{j-1}\to 0$ as $n \rightarrow \infty$, $\lim\limits_{n\to\infty}|\!|\!|x+x_n|\!|\!|_j\le 1$, and $\lim\limits_{m,n\to\infty}|\!|\!|2x+x_m+x_n|\!|\!|_j=2$. Then $\|x_n\|_j \to 0$ as $n\to\infty$.
\end{lemma}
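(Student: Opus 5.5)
We argue by contradiction. Passing to a subsequence, we assume $\|x_n\|_j>\delta>0$ for all $n$. Since $\mathcal{F}_{j-1}$ contains all singletons, $\|x_n\|_\infty\le\|x_n\|_{j-1}\to0$. Lemma~\ref{l23} then gives $\lim_n|\!|\!|x+x_n|\!|\!|_j>|\!|\!|x|\!|\!|_j$, and in particular $|\!|\!|x|\!|\!|_j<1-\eta$ for some $\eta>0$. The aim is to show that $|\!|\!|2x+x_m+x_n|\!|\!|_j\le 2-\kappa$ for some $\kappa=\kappa(\delta,\eta)>0$ and all large $m<n$, contradicting the hypothesis.

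\emph{Step 1 (decoupling).} Fix $\varepsilon>0$ and choose $z\in c_{00}(\overline S)$ with $\|x-z\|_j<\varepsilon$. Fix $m$ large and apply Lemma~\ref{l24} to $z+x_m$, whose support is finite. For $n$ large, every $F\in\mathcal{F}_j$ with $|F\cap\operatorname{supp}(z+x_m)|\ge2$ then has $\Theta_F(x_n)<\varepsilon$. Now take any disjoint $F_1,\dots,F_N\in\mathcal{F}_j$ and split the indices as follows: $I$ is the set of $i$ with $|F_i\cap\operatorname{supp}(z+x_m)|\ge2$, and $I^c$ is the rest. For $i\in I$ the contribution of $x_n$ is $<\varepsilon$. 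For $i\in I^c$ the contribution of $x_m$ is at most $\|x_m\|_\infty$, which is small. Write $w_i=\Theta_{F_i}(x)$, $s_i=\Theta_{F_i}(x_m)$ and $t_i=\Theta_{F_i}(x_n)$. Up to errors of order $\varepsilon$ in each coordinate, which are controlled in the Day norm by a convergent geometric sum, we obtain
$$\Theta_{F_i}(2x+x_m+x_n)\le u_i+v_i,\qquad u=w+s1_I,\quad v=w+t1_{I^c}.$$
Here $u\le(\Theta_{F_i}(x+x_m))_i$ and $v\le(\Theta_{F_i}(x+x_n))_i$ coordinatewise, so $\|u\|_{\rm Day},\|v\|_{\rm Day}\lesssim1$, while $\|w\|_{\rm Day}\le|\!|\!|x|\!|\!|_j<1-\eta$.

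\emph{Step 2 (strict convexity).} We must show that $\|u+v\|_{\rm Day}\le2-\kappa$, uniformly over all choices of $F_i$. The vectors $u-w$ and $v-w$ are nonnegative and disjointly supported, and $w$ is uniformly far inside the unit ball. If $\|u+v\|_{\rm Day}$ were close to $2$, then both $\|u\|_{\rm Day}$ and $\|v\|_{\rm Day}$ would be close to $1$, while $u+v=2w+(s1_I+t1_{I^c})$.

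We would first try to quantify this with Day's explicit formula. As in the proof of Lemma~\ref{l23}, only boundedly many coordinates of $w$ can exceed a fixed threshold. So the large entries of $u$ and $v$ beyond these are carried by the disjoint pieces $s1_I$ and $t1_{I^c}$. Rearranging $u+v$ then places the large entries of $s$ and of $t$ in separate positions, with weights $4^{-k}$ strictly smaller than those they occupy in $u$ and in $v$ separately. A direct computation with the weights should give $\|u+v\|_{\rm Day}^2\le 2\|u\|_{\rm Day}^2+2\|v\|_{\rm Day}^2-c$, where $c>0$ depends only on $\eta$.

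If that computation is awkward, the fallback is to split $w$ into its first $M$ largest coordinates, where the Day norm is uniformly convex because it is a weighted $\ell_2$ norm in finitely many variables, and a tail of Day norm at most $2^{-M}$. The tail error can then be absorbed.

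\emph{Step 3 (conclusion).} Taking the supremum over the $F_i$ and letting $\varepsilon\to0$ gives $\lim_{m,n}|\!|\!|2x+x_m+x_n|\!|\!|_j\le2-\kappa<2$, which is the desired contradiction. Hence $\|x_n\|_j\to0$. The main obstacle is Step~2, namely extracting a gap in the Day norm that is uniform over all families $\{F_i\}$. We expect that the hypothesis $\|x_n\|_j>\delta$, used through Lemma~\ref{l23} to obtain $\eta$, is exactly what makes this gap available.
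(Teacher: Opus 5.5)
\textbf{There is a genuine gap in Step~2.} Your overall structure matches the paper's: Lemma~\ref{l23} forces $|\!|\!|x|\!|\!|_j<1$, Lemma~\ref{l24} decouples $x_m$ from $x_n$, and a convexity argument in the Day norm finishes. Step~1 is sound up to routine $O(\varepsilon)$ bookkeeping. But Step~2 is the crux, as you say yourself, and you have not proved it. Neither of your suggested routes works as written:
\begin{itemize}
\item The inequality $\|u+v\|_{\textrm{Day}}^2\le 2\|u\|_{\textrm{Day}}^2+2\|v\|_{\textrm{Day}}^2-c(\eta)$ is false without a near-extremality hypothesis: take $s1_I=t1_{I^c}=0$. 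The ``weights strictly smaller'' heuristic does not amount to an argument.
\item The fallback rests on a false claim. Even on $\mathbb{R}^M$, the Day norm is not a weighted $\ell_2$ norm; it is the maximum of $M!$ of them. The parallelogram law already fails for $e_1,e_2\in\mathbb{R}^2$.
\item Truncating $w$ to $M$ coordinates does not localize $u$ and $v$. The pieces $s1_I$ and $t1_{I^c}$ can be large on arbitrarily many coordinates, since a bounded Day norm only bounds the sup norm. So no dimension-free modulus comes out of this route.
\end{itemize}

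\textbf{The missing idea.} What the paper uses is to freeze one weighted $\ell_2$ norm that norms the sum, and apply the Hilbertian parallelogram law in it. Let $\|\cdot\|_\pi$ be the weighted $\ell_2$ norm whose weights are the arrangement of $(4^{-k})$ attaining $\|u+v\|_{\textrm{Day}}$. Since $\|\cdot\|_\pi\le\|\cdot\|_{\textrm{Day}}$, you have $\|u\|_\pi,\|v\|_\pi\le 1+\varepsilon'$, and so
\[
\|u-v\|_\pi^2=2\|u\|_\pi^2+2\|v\|_\pi^2-\|u+v\|_{\textrm{Day}}^2\le 4(1+\varepsilon')^2-\|u+v\|_{\textrm{Day}}^2 .
\]
The vectors $u-w$ and $v-w$ are nonnegative with disjoint supports. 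Hence $|u-v|=(u-w)+(v-w)$, so $u+v=2w+|u-v|$, and therefore
\[
2\|w\|_{\textrm{Day}}\ge 2\|w\|_\pi\ge\|u+v\|_{\textrm{Day}}-\|u-v\|_\pi .
\]
If $\|u+v\|_{\textrm{Day}}>2-\kappa$ with $\kappa,\varepsilon'\ll\eta^2$, this forces $\|w\|_{\textrm{Day}}>1-\eta$, which is a contradiction. The resulting $\kappa$ is uniform in $N$ and in the choice of the $F_i$, because every weighted $\ell_2$ norm is isometric to Hilbert space.

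\textbf{How the paper orders this.} The paper uses the same mechanism in a different order. It first chooses $F_1,\dots,F_N$ and a weight arrangement nearly norming $2x+x_n+x_m$. It then uses uniform convexity of that fixed $\ell_2(w)$ to make the profiles $(\Theta_{F_i}(x+x_n))_i$ and $(\Theta_{F_i}(x+x_m))_i$ close. Only after that does it apply Lemma~\ref{l24}, with $z=x_n$, to show that both $x_n$ and $x_m$ contribute negligibly. This yields $|\!|\!|x|\!|\!|_j\ge1$, contradicting Lemma~\ref{l23}.
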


\begin{proof} 
Let $\varepsilon>0$ and $\delta=\delta(\varepsilon):= c\varepsilon^2$, where $c>0$ is sufficiently small. Fix $n\ge 1$ and fix $m>n$. By the definition of the norm $|\!|\!|\cdot|\!|\!|_j$, there exist $N\ge 1$ and disjoint sets $F_1,\dots,F_N$ in $\mathcal{F}_j$ such that
$$
|\!|\!| 2x+x_n+x_m|\!|\!|_j\le \|(\Theta_{F_i}(2x+x_n+x_m))_{i=1}^N\|_{\ell_2(w)}+\delta,
$$
where $\ell_2(w)$ is the weighted $\ell_2$ norm
$$
\|(a_i)_{i=1}^N\|_{\ell_2(w)}=\Big[\sum_{i=1}^N a_i^2w_i\Big]^{1/2}
$$
and $(w_1,\dots,w_N)$ is some rearrangement of $(4^{-i})_{j=1}^N$.
So for $m$ and $n$ sufficiently large, we have
$$
\|(\Theta_{F_i}(2x+x_n+x_m))_{i=1}^N\|_{\ell_2(w)}>2-2\delta,
$$
and $|\!|\!|x+x_n|\!|\!|_j\le 1+\delta$, $|\!|\!|x+x_m|\!|\!|_j\le 1+\delta$. Thus, we also have
$$
\|(\Theta_{F_i}(x+x_n))_{i=1}^N\|_{\ell_2(w)}\le|\!|\!|x+x_n|\!|\!|_j\le 1+\delta
$$
and
$$
\|(\Theta_{F_i}(x+x_m))_{i=1}^N\|_{\ell_2(w)}\le|\!|\!|x+x_m|\!|\!|_j\le 1+\delta.
$$ Since  $\delta = c\varepsilon^2$ and the modulus of convexity of $\ell_2(w)$ is of power type $2$, it follows that provided $c$ is sufficiently small,
$$
\|(\Theta_{F_i}(x_n-x_m))_{i=1}^N\|_{\ell_2(w)}<\frac\varepsilon4.
$$
Let $A=\{1\le i\le N \colon |F_i\cap\operatorname{supp}(x_n)|\le 1\}$. Then $\Theta_{F_i}(x_n)\le\|x_n\|_{\infty}$ for all $i\in A$ and so
$$
\|(\Theta_{F_i}(x_n))_{i\in A}\|_{\ell_2(w)}\le\|x_n\|_{\infty}.
$$
Hence, provided $n$ is sufficiently large, $\|(\Theta_{F_i}(x_n))_{i\in A}\|_{\ell_2(w)}<\delta$, since $\|x_n\|_\infty\le\|x_n\|_{j-1}\to 0$ as $n\to\infty$.

Let $B=A^c=\{1\le i\le N\colon |F_i\cap\operatorname{supp}(x_n)|\ge 2\}$. By Lemma \ref{l24}, provided $m>n$ is sufficiently large (i.e., $m>M(n)$), $\Theta_{F_i}(x_m)<\delta$ for all $i\in B$ (uniformly over all possible choice of $F_i$). Thus, $\|(\Theta_{F_i}(x_m))_{i\in B}\|_{\ell_2(w)}<\delta$. Hence,
\begin{align*}
\|(\Theta_{F_i}(x_n))_{i=1}^N\|_{\ell_2(w)}&\le
\|(\Theta_{F_i}(x_n))_{i\in A}\|_{\ell_2(w)}+
\|(\Theta_{F_i}(x_n))_{i\in B}\|_{\ell_2(w)}\\
&\le \delta+\|(\Theta_{F_i}(x_n-x_m))_{i\in B}\|_{\ell_2(w)}+
\|(\Theta_{F_i}(x_m))_{i\in B}\|_{\ell_2(w)}\\
&\le\delta+\frac\varepsilon4+\delta=2\delta+\frac\varepsilon4.
\end{align*}
Thus,
\begin{align*}
\|(\Theta_{F_i}(x_m))_{i=1}^N\|_{\ell_2(w)}&
\le \|(\Theta_{F_i}(x_n))_{i=1}^N\|_{\ell_2(w)}+
\|(\Theta_{F_i}(x_n-x_m))_{i=1}^N\|_{\ell_2(w)}\\
&<\left(2\delta+\frac\varepsilon4\right)+\frac\varepsilon4=2\delta+\frac\varepsilon2.
\end{align*}
Recall that $\|(\Theta_{F_i}(2x+x_n+x_m))_{i=1}^N\|_{\ell_2(w)}>2-2\delta$.
Hence by the triangle inequality,
$$
\|(\Theta_{F_i}(2x))_{i=1}^N\|_{\ell_2(w)}>2-2\delta-\left(2\delta+\frac\varepsilon2\right)-
\left(2\delta+\frac\varepsilon4\right)
=2-6\delta-\frac34\varepsilon>2-\varepsilon
$$
if $c$ is sufficiently small. Therefore,
$$
|\!|\!|x|\!|\!|_j\ge \|(\Theta_{F_i}(x))_{i=1}^N\|_{\ell_2(w)}>1-\frac\varepsilon2.
$$
Since $\varepsilon>0$ is arbitrary, $|\!|\!|x|\!|\!|_j\ge 1$. But $\lim\limits_{n\to\infty}|\!|\!|x+x_n|\!|\!|_j\le 1$ and $\|x_n\|_\infty\le\|x_n\|_{j-1}\to 0$ by assumption. So by Lemma \ref{l23}, we obtain by contradiction that $\|x_n\|_j\to 0$ as $n\to \infty$.
\end{proof}

\section{$2R$ renorming}\label{sec:2Rrenorm}
Let $(e_j)$ be an unconditional basis for a reflexive Banach space $Y$. Let $X$ be the complection of $c_{00}(\overline{S})$ equipped with the norm

$$
\|x\|=\Big\|\sum_{j=0}^\infty\frac{\|x\|_j}{2^j}e_j\Big\|_{Y}. 
$$  We may regard $X$ as a kind of abstract interpolation space (cf. \cite{DFJP} and the mongraph \cite{Beau}). 
The space constructed by Argyros and Motakis \cite{AM} is a particular case of such spaces $(X,\|\cdot\|)$. 

By renorming $Y$ we may assume according to Theorem \ref{t22} that $\|\cdot\|_Y$ is $1$-unconditional and has the $2R$ property.

\begin{remark} $(X,\|\cdot\|)$ does not have the $2R$ property. To see this, we may assume $\{0,1\} \subseteq S$.  For $n \ge 0$, define $(\gamma_n) \in \overline{S}$ as follows:
$$ \gamma_0 = (0,0,\dots), \gamma_1 = (1,0,0,\dots),  \gamma_2 = (1,1,0,0,\dots), \gamma_3 = (1,1,1,0,0,\dots),$$
etc.   Since $d(\gamma_0, \gamma_n) = 1$ for $n \ge 1$, it follows that $\{\gamma_0, \gamma_n\} \notin \mathcal{F}_j$ for all $n \ge 1$ and  $j \ge 0$.  Hence for $j \ge 0$,
$$\|1_{\{\gamma_0\}}\|_j = \|1_{\{\gamma_0\}}+ 1_{\{\gamma_n\}}\|_j =1. $$
Let $x_n = 1_{\{\gamma_0\}}+ 1_{\{\gamma_n\}}$. For $j \ge 0$,
$$ \|x_n\|_j =1,  \|x_n+x_m\|_j =2, \|x_n - x_m\|_j\ge 1 \qquad (m \ne n).$$ Hence
$$\|x_n+x_m\|_X = \|x_n\|_X + \|x_m\|_X.$$
So $X$ is not rotund and hence not $2R$. Moreover,  if $X$ is reflexive then $x_n \rightarrow 1_{\{\gamma_0\}}$ weakly but not strongly and
$\|x_n\|_X = \|1_{\{\gamma_0\}}\|_X$ ($n\ge1$).
So  $X$ fails the Kadec-Klee property.
\end{remark}

We equip $(X,\|\cdot\|)$ with the equivalent norm: 
$$
\|x\|_*=(\|x\|^2+\|x\|^2_{\textrm{Day}}+\sum_{j=0}^\infty \varepsilon_j
|\!|\!|x|\!|\!|_j^2)^{1/2},
$$
where $\varepsilon_j\to0$ fast enough to ensure convergence, e.g., $\varepsilon_j = 2^{-2j}$.

\begin{theorem}
The following are equivalent.

$(1)$ $(X,\|\cdot\|)$ is reflexive.

$(2)$ $(X, \| \cdot\|_*)$ has $2R$ property.
\end{theorem}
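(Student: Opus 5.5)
The implication (2)$\Rightarrow$(1) is the easy direction. A $2R$ norm is in particular $W2R$, and by James' characterization \cite{J} (as recalled in the Introduction) a space with an equivalent $W2R$ norm is reflexive. Since $\|\cdot\|_*$ is equivalent to $\|\cdot\|$, the space $(X,\|\cdot\|)$ is reflexive.

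For (1)$\Rightarrow$(2) I will use the characterization \eqref{eq: alternativedef}. Take $(x_n)\subset X$ satisfying \eqref{eq: alternativedef} for $\|\cdot\|_*$. Each summand of $\|\cdot\|_*^2$ is the square of a seminorm $p$, and each satisfies $p(x_m+x_n)^2\le 2(p(x_m)^2+p(x_n)^2)$. Hence \eqref{eq: alternativedef} passes to every summand separately: to $\|\cdot\|$, to $\|\cdot\|_{\textrm{Day}}$, and to each $|\!|\!|\cdot|\!|\!|_j$. The sequence is bounded, so after passing to subsequences I may assume $\|x_n\|_j\to a_j$ and $|\!|\!|x_n|\!|\!|_j\to L_j$ for every $j$.

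Step 1 (weak and coordinatewise limit). I will run the Hájek--Johanis argument of Theorem \ref{lem: HJresult} for the reflexive space $X$, with its $1$-unconditional basis $(1_{\{\gamma\}})_{\gamma\in\overline S}$ and the Day term. This should give $x\in X$ with $x_n\to x$ weakly and $\|x_n-x\|_\infty=\|x_n-x\|_0\to0$. The countable-basis statement should transfer, because everything lives on a countable union of supports.

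Step 2 (the $Y$-vectors). From the $\|\cdot\|$ condition, the triangle inequality in each $\|\cdot\|_j$, and $1$-unconditionality of $\|\cdot\|_Y$, I get that $v_n=\sum_j 2^{-j}\|x_n\|_je_j\in Y$ satisfies the $2R$ hypothesis in $Y$. Indeed,
\[
\Big\|\sum_j 2^{-j}\|x_n+x_m\|_j e_j\Big\|_Y\le \|v_n+v_m\|_Y,
\]
so $\|v_n+v_m\|^2-2(\|v_n\|^2+\|v_m\|^2)\to0$. Since $\|\cdot\|_Y$ is $2R$, $v_n\to v$ in $Y$. This yields uniform tail control: for each $\eta>0$ there is $J$ with $\|\sum_{j>J}2^{-j}\|x_n\|_je_j\|_Y<\eta$ for all $n$. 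The same holds for $x$, so by the lattice property also for $z_n=x_n-x$.

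Step 3 (induction on $j$). I will prove $\|z_n\|_j\to0$ for every $j$. The case $j=0$ is Step 1. Assume $\|z_n\|_{j-1}\to0$; if $L_j=0$ there is nothing to show, so normalise $L_j=1$. Since $z_n\to0$ weakly, $x\in X_j$ is approximable by finitely supported vectors, and each $z_n$ is nearly finitely supported. A gliding hump argument then gives a subsequence and perturbations $z_n'\in c_{00}(\overline S)$ with disjoint supports and $\|z_n-z_n'\|_j\to0$ along it. The condition for $|\!|\!|\cdot|\!|\!|_j$ gives $|\!|\!|x+z_n'|\!|\!|_j\to1$ and $|\!|\!|2x+z_n'+z_m'|\!|\!|_j\to2$. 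Lemma \ref{l25} then gives $\|z_n'\|_j\to0$, hence $\|z_n\|_j\to0$. Because every subsequence has a further subsequence with this property, the full sequence satisfies it.

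Step 4 (conclusion). Combining the finitely many coordinates $j\le J$ from Step 3 with the uniform tail bound from Step 2 gives $\|z_n\|\to0$, so $x_n\to x$ strongly. I expect Step 3 to be the main obstacle. The perturbation must keep exactly the hypotheses of Lemma \ref{l25}, including $\lim|\!|\!|x+x_n|\!|\!|_j\le1$ after normalisation. One must also ensure the small errors in $\|\cdot\|_j$ do not interfere, which is fine because $|\!|\!|\cdot|\!|\!|_j\le\|\cdot\|_j$ up to a constant. A secondary point is justifying the nonseparable version of Theorem \ref{lem: HJresult} in Step 1.
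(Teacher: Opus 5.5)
Your proposal is correct and follows essentially the same route as the paper's proof. Both use Theorem~\ref{lem: HJresult} to get the weak limit and $\|\cdot\|_\infty$-convergence, and then the $2R$ property of $Y$ (via $1$-unconditionality) for uniform tail control. Both finish with induction on $j$, reducing each step to Lemma~\ref{l25} by a gliding hump, and both get (2)$\Rightarrow$(1) from James' theorem. Your extra remarks make explicit details the paper leaves implicit: splitting the condition into its summands, restricting Theorem~\ref{lem: HJresult} to the countable union of supports, and the subsequence bookkeeping.
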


(In the next section we provide sufficient conditions for reflexivity.)

\begin{proof}$(1) \Rightarrow (2)$.
Suppose
$$
\lim_{m,n\to\infty}[\|x_m+x_n\|^2_*-2(\|x_m\|_*^2+\|x_n\|_*^2)]=0.
\leqno(*)
$$
Then by Theorem \ref{lem: HJresult} \cite{HJ}, $x_n\to x$ weakly for some $x\in X$ and $\|x_n-x\|_\infty\to 0$. In particular, this implies that
$$
\lim_{m,n\to\infty} [\|x_m+x_n\|^2-2(\|x_m\|^2+\|x_n\|^2)]=0.
$$
Since the basis of $Y$ is $1$-unconditional, it follows that $y_n:=\sum_{j=0}^\infty\frac{\|x_n\|_j}{2^j}e_j \in(Y,\|\cdot\|_Y)$ and
\begin{align*}
\|y_m+y_n\|_Y&=\Big\|\sum_{j=0}^\infty\frac1{2^j}(\|x_m\|_j+\|x_n\|_j)e_j\Big\|_Y\\
&\ge\Big\|\sum_{j=0}^\infty\frac1{2^j}\|x_m+x_n\|_je_j\Big\|_Y=\|x_m+x_n\|.
\end{align*}
Thus,
$$
\lim_{m,n\to\infty}[\|y_m+y_n\|^2_Y-2(\|y_m\|^2_Y+\|y_n\|_Y^2)]=0.
$$
It follows from the $2R$-property of $(Y,\|\cdot\|_Y)$ that there is $y\in Y$ such that $\|y_n-y\|_Y\to 0$ as $n\to\infty$. In particular the ``tail''
$$
\Big\|\sum_{j=N+1}^\infty \frac{\|x_n\|_j}{2^j}e_j\Big\|_Y\to 0
$$
uniformly over $n$ as $N\to\infty$. 

Let $\tilde x_n:=x_n-x$, so that $x_n=x+\tilde x_n$. We wish to show that $\|\tilde x_n\|_*\to 0$ as $n\to\infty$. Note that
$$
\Big\|\sum_{j=N+1}^\infty\frac{\|\tilde x_n\|_j}{2^j}e_j\Big\|_Y\le
\Big\|\sum_{j=N+1}^\infty\frac{\|x_n\|_j}{2^j}e_j\Big\|_Y+
\Big\|\sum_{j=N+1}^\infty\frac{\|x\|_j}{2^j}e_j\Big\|_Y\to 0
$$
uniformly over $n$ as $N\to\infty$. So it suffices to show that, for each $j\ge0$, $\|\tilde{x}_n\|_j\to 0$ as $n\to\infty$. (Here $\|x\|_0=\|x\|_\infty)$, which
we prove by induction.

The statement is true for $j=0$ since $\|\tilde x_n\|_\infty\to 0$ by Theorem \ref{lem: HJresult} \cite{HJ}. So suppose the result is true for $j-1\ge 0$. Now condition $(*)$ implies that for all $j\ge 0$ 
$$
\lim_{m,n\to\infty}[|\!|\!|x_m+x_n|\!|\!|_j^2
-2(|\!|\!|x_m|\!|\!|_j^2
+|\!|\!|x_n|\!|\!|_j^2]=0,
$$
so
$$\lim_{m,n\to\infty}[|\!|\!|2x+\tilde x_m+\tilde x_n|\!|\!|_j^2
-2(|\!|\!|x +\tilde x_m|\!|\!|_j^2
+|\!|\!|x+\tilde x_n|\!|\!|_j^2]=0.
$$
Suppose that $(\|\tilde x_n\|_j)_{n=1}^\infty$ does not tend to $0$. By passing to a subsequence and approximating by finitely supported vectors, we may assume $\tilde x_n\in c_{00}(\overline S)$, $\operatorname{supp}(\tilde x_n)\cap \operatorname{supp}(\tilde x_m)=\emptyset$ for $m\ne n$, $\inf_{n\ge 1}\|\tilde x_n\|_j=\delta>0$ and that
$$
\lim_{m,n\to\infty}|\!|\!|2x+\tilde x_m+\tilde x_n|\!|\!|_j =2\lim_{n\to\infty}|\!|\!|x+\tilde x_n|\!|\!|_j,
$$
where both limits exist.

By the inductive hypothesis $\|\tilde x_n\|_{j-1}\to 0$ as $n\to\infty$. So, by Lemma \ref{l25}, $\|\tilde x_n\|_j\to 0$, which contradicts the fact that $\inf_{n}\|\tilde x_n\|_j>0$. So the result holds for $j$.

$(2) \Rightarrow (1)$ is immediate since $\|\cdot\|$ and $\|\cdot\|_{*}$ are equivalent and the $2R$ property implies reflexivity.
\end{proof}

\section{Reflexivity}\label{sec:Refl}

In this section we impose restrictions on the families $\mathcal{F}_n$, $n\in\mathbb{N}$.   The complexity of $(\mathcal{F}_n)_{n=1}^\infty$ must increase sufficiently with $n$  to ensure  reflexivity of $X$.

\begin{definition}
Let $S$ be a finite set. We say that $(\mathcal{F}_n)_{n=1}^\infty$ satisfy the \textit{inclusion condition} if for each  $n\ge1$ and $M\in\mathbb{N}$, whenever $(A_i)_{i=1}^\infty$ is a sequence of disjoint sets such that $A_i\in \mathcal{F}_n$ and $A_i^{\#} \to \infty$ as $i \to \infty$, then there exist $i_1<i_2<\cdots<i_M$ such that $\mathop{\cup}\limits_{k=1}^M A_{i_k}\in \mathcal{F}_{n+1}$. For $n=0$, the inclusion condition requires that  for every sequence $(\gamma_i)\subseteq \overline S$ and for each $M\in\mathbb{N}$ there exist $i_1<i_2<\cdots<i_M$ such that $\{\gamma_{i_k}\colon 1\le k\le M\}\in\mathcal{F}_1$. (The inclusion condition makes sense when $S$ is finite.)
\end{definition}

\begin{remark}
Note that the maximal families $(\mathcal{F}_n)$ consisting of all finite subsets $A\subseteq\overline S$ such that $A=\mathop{\cup}\limits_{i=1}^{A^{\#}} B_i$, where $B_i\in\mathcal{F}_{n-1}$, $1\le i\le A^{\#}$, satisfy the inclusion condition when $S$ is finite.
\end{remark}

Let again $Y$ be a reflexive space with a $1$-unconditional basis $(e_j)_{j=0}^\infty$ and consider the corresponding space $X$.

Define $P_n : X\to \sum_{j=0}^n \oplus X_j$, where $X_j$ is the completion of $(c_{00}(\overline S),\|\cdot\|_j)$, so
$$
\|P_n(x)\|=\Big\|\sum_{j=0}^n\frac{\|x\|_j}{2^j}e_j\Big\|_Y.
$$
Note that
$$
\frac{\|x\|_n}{2^n}\le \|P_n(x)\|\le 2\|x\|_n.
$$

\begin{lemma}\label{l41}
Suppose that $S$ is finite and that the sequence $(\mathcal{F}_n)$ of families of finite subsets of $\overline S$ satisfies the inclusion condition. Then, for each $n\ge 0$, $P_n$ is strictly singular.
\end{lemma}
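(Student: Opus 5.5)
Since $\|P_n(x)\|$ is equivalent (with constants depending on $n$) to $\|x\|_n$, and $\|x\|\ge 2^{-(n+1)}\|x\|_{n+1}$ because the basis of $Y$ is $1$-unconditional and normalized, it suffices to prove the following. For every infinite-dimensional closed subspace $Z\subseteq X$ and every $\varepsilon>0$ there is $z\in Z$ with $\|z\|_{n+1}\ge 1$ but $\|z\|_n\le \varepsilon$. A ratio $\|z\|_n/\|z\|_{n+1}$ that can be made arbitrarily small then gives $\|P_n z\|\le 2\|z\|_n$ and $\|z\|\ge 2^{-n-1}\|z\|_{n+1}$. So $P_n|_Z$ is not an isomorphism, and $P_n$ is strictly singular.

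\textbf{Step 1: reduction to disjoint blocks.} Fix an infinite-dimensional $Z$. By a standard gliding hump argument in the unconditional (uncountable) coordinate system of $c_{00}(\overline S)$, together with a small perturbation, we may pass to a normalized sequence $(z_i)$ in $Z$, or in a space isomorphic to a subspace of $Z$, with finite pairwise disjoint supports. We can arrange $\|z_i\|_\infty\to 0$, or else handle the case where $\|\cdot\|_\infty$ is already equivalent to the norm on a subspace. In the latter case we get $c_0$-like behaviour, where $\|\cdot\|_j\ge\|\cdot\|_\infty$, and the inclusion condition for $\mathcal F_0\to\mathcal F_1$ already makes $\|\cdot\|_1$ large on sums of spread-out singletons; this sub-case is treated the same way with $n=0$.

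\textbf{Step 2: pushing supports to $A^\sharp\to\infty$.} Because $S$ is finite, $\overline S$ is compact metrizable and $d(p,q)\to\infty$ means closeness. By passing to a subsequence and discarding small pieces, we want the relevant norming sets $A_i\in\mathcal F_n$ for $z_i$, with $\Theta_{A_i}(z_i)\ge c\|z_i\|_n$, to satisfy $A_i^\sharp\to\infty$. The argument for this uses Lemma \ref{l24}-type reasoning: a set $A$ with small $A^\sharp$ decomposes into at most $A^\sharp$ sets from $\mathcal F_{n-1}$. The cleaner alternative is to build blocks in $\mathcal F_n$-norm directly from a subspace where the lower norms are small, inductively in $n$.

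\textbf{Step 3: averaging.} Take $z=\frac1M\sum_{k=1}^M z_{i_k}$, normalized so that each $\|z_{i_k}\|_n\approx 1$. The inclusion condition gives $i_1<\dots<i_M$ with $\bigcup_k A_{i_k}\in\mathcal F_{n+1}$, so $\|z\|_{n+1}\gtrsim c$. For the upper bound on $\|z\|_n$, choose the subsequence so that every $F\in\mathcal F_n$ essentially meets only one block. Any $F$ meeting two blocks at distance-level $\ge k$ is, by the successor structure, a union of at most $k$ sets from $\mathcal F_{n-1}$. By induction on $n$, with $\|\cdot\|_{n-1}$ small on averages, such intersections contribute little. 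This yields $\|z\|_n\lesssim 1/M+\text{small}$.

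I expect the main obstacle to be this last estimate together with Step 2: controlling $\sup_{F\in\mathcal F_n}\Theta_F$ of an average of disjoint blocks needs a Schreier-type spreading argument. To run it I would set up an induction on $n$, proving the stronger statement that on every infinite-dimensional subspace there are normalized block averages whose $\|\cdot\|_n$-norm is arbitrarily small while $\|\cdot\|_{n+1}$ stays bounded below.
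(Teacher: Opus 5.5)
\textbf{Verdict: genuine gap.} The upper bound on $\|z\|_n$ in your Step 3 is the heart of the argument, you leave it open, and the mechanisms you propose for it do not work as stated.

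\textbf{What is fine.} Your reduction is sound: it suffices to find, in every infinite-dimensional $Z\subseteq X$, vectors with $\|z\|_n\le\varepsilon\|z\|_{n+1}$. The case $n=0$ is immediate, since disjointness gives $\|\frac1M\sum_k u_{i_k}\|_\infty\le\frac1M$. Step 2 also works once you apply the level-$(n-1)$ statement in tail subspaces to get disjoint blocks $u_i$ with $\|u_i\|_n=1$ and $\|u_i\|_{n-1}\to0$. A norming set $A_i\in\mathcal F_n$ with $\Theta_{A_i}(u_i)\ge\frac12$ is a union of $A_i^\sharp$ members of $\mathcal F_{n-1}$. Hence $A_i^\sharp\ge 1/(2\|u_i\|_{n-1})\to\infty$. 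The dichotomy about $\|z_i\|_\infty$ in Step 1 is unnecessary.

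\textbf{The gap.} You need $\|\frac1M\sum_k u_{i_k}\|_n\lesssim\frac1M$, and your suggested mechanisms do not deliver it.
You cannot arrange that every $F\in\mathcal F_n$ meets only one block.
Distances between points of \emph{different} blocks give no control. By compactness of $\overline S$ the blocks can accumulate at a point, so cross-distances tend to $\infty$ and never force $F^\sharp$ down.
Smallness of $\|u_i\|_{n-1}$ alone, let alone ``on averages'', is not enough. If it decays slowly compared with how tightly the supports cluster, a single $F$ with large $F^\sharp$ can collect most of the mass of many blocks. For example, this happens at $n=1$ with the maximal families.
Since the bound must hold for whatever indices the inclusion condition returns, it has to be built into the whole sequence in advance. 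No Schreier-type spreading argument is needed.

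\textbf{The missing idea: a diagonal choice as in Lemma \ref{l24}.}
Put $N_l=\max\{d(p,q)\colon p\ne q\in\operatorname{supp}(u_l)\}$. Choose the blocks successively so that $\|u_i\|_{n-1}\le 2^{-i}/(N_1+\dots+N_{i-1}+1)$.
Given $F\in\mathcal F_n$, let $l$ be the first block that $F$ meets in at least two points.
Earlier blocks are met in at most one point, so they contribute $\Theta_F(u_j)\le\|u_j\|_\infty\le 2^{-j}$.
Block $l$ contributes at most $\|u_l\|_n=1$.
Since $F^\sharp\le N_l$, $F$ is a union of at most $N_l$ sets of $\mathcal F_{n-1}$. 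So each later block contributes at most $N_l\|u_j\|_{n-1}\le 2^{-j}$.
Hence $\|\frac1M\sum_k u_{i_k}\|_n\le\frac2M$ for \emph{every} $i_1<\dots<i_M$. The inclusion condition gives $\|\frac1M\sum_k u_{i_k}\|_{n+1}\ge\frac12$, and the perturbation back into $Z$ is harmless.
With this inserted, your route is a complete, self-contained proof. It even gives the stronger fact that $\|\cdot\|_n$ and $\|\cdot\|_{n+1}$ are equivalent on no infinite-dimensional subspace of $X$.

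\textbf{Comparison with the paper.} The paper invokes the $c_0$-saturation of the generalized Schreier spaces $X_n$. This reduces the problem to a $c_0$-sequence $(y_i)$ on which $P_n$ is an isomorphism, so the $c_0$ upper estimate replaces your upper bound entirely. A dichotomy on $A_i^\sharp$ then finishes by contradiction.
If $A_i^\sharp=M$ infinitely often, then $\|y_i\|_{n-1}\ge c/M$, so $P_{n-1}$ fixes a copy of $c_0$, contrary to the inductive hypothesis.
If $A_i^\sharp\to\infty$, the inclusion condition gives $\|\sum_{k\le M}y_{i_k}\|_{n+1}\ge Mc$.
Your approach trades the saturation fact for the explicit estimate above.
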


\begin{proof}
It is known that the generalized Schreier spaces $(X_j, \|\cdot\|_j)$ are $c_0$-saturated. So it suffices to check that $P_n$ does not fix any copy of $c_0$ in $X$. The proof is by induction on $n$. Let $n=0$. Note that $X_0=c_0(\overline S)$. Suppose that $(y_i)_{i=1}^\infty\subseteq X$ is equivalent to the unit vector basis (u.v.b.) of $c_0$. By approximation and passing to a subsequence we may assume that the supports of $y_i$'s are finite and disjoint. Suppose that $P_0$ is an isomorphism on $[(y_i)_{i=1}^\infty]$. Hence there exists $c>0$ such that $\|y_i\|_0\ge c$, $i\in\mathbb{N}$. Since $X_0=c_0(\overline S)$ it follows that there exists $\gamma_i\in\operatorname{supp}(y_i)$ such that $|e^*_{\gamma_i}(y_i)|\ge c$. Then by the inclusion condition for $n=0$, for every $M\in\mathbb{N}$ there is a subset $A_M\subseteq\{\gamma_i\colon i\in\mathbb{N}\}$, $|A_M|=M$, $A_M\in\mathcal{F}_1$. Thus,
$$
\Big\|\sum_{\gamma_i\in A_M} y_i\Big\|_1\ge
\sum_{\gamma_i\in A_M}|e^*_{\gamma_i}(y_i)|\ge|A_M|c=Mc
$$
and hence,
$$
\Big\|\sum_{\gamma_i\in A_M} y_i\Big\|_X\ge\frac{Mc}2,
$$
which contradicts the fact that $(y_i)_{i=1}^\infty\subseteq X$ is equivalent to the u.v.b. of $c_0$, since $M$ can be arbitrarily large.

We prove the case $n\ge 1$ by induction. Suppose the result holds for $0\le m\le n-1$. As above, assume $(y_i)_{i=1}^\infty\subseteq X$ is equivalent to the u.v.b. of $c_0$ and that $\left(\operatorname{supp}(y_i)\right)_{i=1}^\infty$ are finite and disjoint. Suppose that $P_n$ is an isomophism on $\left[(y_i)\right]_{i=1}^\infty$. Hence $(y_i)_{i=1}^\infty\subseteq X_n$ is also equivalent to the u.v.b. of $c_0$. So there exists $c>0$ such that $\|y_i\|_n\ge c$ for all $i\ge 1$. Choose $A_i\in\mathcal{F}_n$ such that $\sum_{\gamma\in A_i}|e_\gamma^*(y_i)|\ge c$. Suppose there exists $M\in\mathbb{N}$ such that $A_i^{\#}=M$ for infinitely many $i$. For such $i$, we have $A_i=\mathop{\cup}\limits_{k=1}^M B_{i,k}$, where $B_{i,k}\in\mathcal{F}_{n-1}$. So for each such $i$, there exists $k(i)$ such that $\sum_{\gamma\in B_{i,k}}|e^*_\gamma(y_i)|\ge\frac cM$, and hence $\|y_i\|_{n-1}\ge\frac cM$. So $\{y_i \colon A^{\#}_i=M\}$ is equivalent to the u.v.b. of $c_0$ in $X_{n-1}$, which contradicts the inductive hypothesis.

On the other hand, if $\{i\colon A_i^{\#}=M\}$ is finite for each $M\in\mathbb{N}$, it follows that $A_i^{\#}\to\infty$ as $i\to\infty$. By the inclusion condition, since $S$ is finite, for every $M\in\mathbb{N}$ there exist $i_1<i_2<\cdots<i_M$ such that $A=\mathop{\cup}\limits_{k=1}^M A_{i_k}\in\mathcal{F}_{n+1}$. But then
$$
\Big\|\sum_{k=1}^M y_{i_k}\Big\|_{n+1}\ge \sum_{k=1}^M \sum_{\gamma\in A_{i_k}}|e^*_\gamma(y_{i_k})|\ge Mc,
$$
and hence 
$$
\Big\|\sum_{k=1}^M y_{i_k}\Big\|_X\ge \frac{Mc}{2^{n+1}}.
$$
Since $M$ is arbitrary, this contradicts the fact that $(y_i)_{i=1}^\infty$ is equivalent to the u.v.b. of $c_0$.
\end{proof}

\begin{theorem}\label{t42}
Suppose $S$ is finite. If $(\mathcal{F}_n)_{n=1}^\infty$ satisfy the inclusion condition then $X$ is reflexive.
\end{theorem}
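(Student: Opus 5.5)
We will use the standard argument for interpolation spaces of Davis--Figiel--Johnson--Pe{\l}czy\'nski type. The goal is to show that $X$ contains no isomorphic copy of $c_0$ or $\ell_1$ and has an unconditional basis; James' theorem then gives reflexivity. The unit vectors $(e_\gamma)_{\gamma\in\overline S}$ form a $1$-unconditional (uncountable) basis of $X$. Indeed, each $\|\cdot\|_j$ is a lattice norm on $c_{00}(\overline S)$ and $\|\cdot\|_Y$ is $1$-unconditional, so $\|\cdot\|$ is a lattice norm. By James' theorem for unconditional bases (which applies to uncountable unconditional bases, since it suffices to check separable subspaces spanned by countably many basis vectors), $X$ is reflexive if and only if it contains no subspace isomorphic to $c_0$ or $\ell_1$.

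\emph{No copy of $\ell_1$.} Suppose $(y_i)$ is a normalized block sequence in $X$ equivalent to the unit vector basis of $\ell_1$. We split the norm into $P_n$ and a tail. The tail part $T_n(x):=\sum_{j>n}2^{-j}\|x\|_j e_j\in Y$ satisfies
\[
\|T_n(x)\|_Y\le \Big\|\sum_{j>n}2^{-j}e_j\Big\|_Y\,\|x\|_{\ell_1(\overline S)}
\]
and, more importantly, $\|x\|_j\le \|x\|_{\ell_1}$ with $\|x\|_j\le 2^{j}\cdot(\text{const})\|x\|$. The cleaner route is to show that $X$ contains no $\ell_1$ because the map $x\mapsto (2^{-j}\|x\|_j)_j$ is a nonlinear map into the reflexive space $Y$. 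Consider averages $z=\frac1M\sum_{k=1}^M \pm y_{i_k}$. For each fixed $j$, since $X_j$ is $c_0$-saturated and in fact $\|\cdot\|_j$-normalized blocks of a weakly null sequence have small $\|\cdot\|_j$-averages, we get $\|z\|_j\to0$ for suitable choices. This uses that $\mathcal F_j$ is compact, so $X_j$ is isomorphic to a subspace of $C(K)$ with $K$ scattered and hence contains no $\ell_1$; thus $\ell_1$-averages go to zero in $\|\cdot\|_j$. Then $\|z\|$ is controlled by $\|\sum_{j>n}2^{-j}\|z\|_je_j\|_Y$. The uniform smallness of this tail needs the standard DFJP trick: in reflexive $Y$ with a basis, $(e_j)$ is shrinking and boundedly complete, and the coefficient vectors $(2^{-j}\|z\|_j)_j$ are dominated coordinatewise by $(2^{-j}\|z\|_j)\le$ those of a fixed bounded vector. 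I expect this step to need care. I would first try passing to a subsequence such that the vectors $v_i=(2^{-j}\|y_i\|_j)_j\in Y$ converge weakly and using the unconditional basis of $Y$ to get a uniform tail estimate via a diagonal argument; $\ell_1$-averages of blocks then inherit a tail bounded by that of the weak limit, which is small.

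\emph{No copy of $c_0$.} This is where Lemma~\ref{l41} enters. Suppose $(y_i)$ is a normalized disjointly supported sequence in $X$ equivalent to the unit vector basis of $c_0$. By Lemma~\ref{l41}, each $P_n$ is strictly singular, so for every $n$ and $\varepsilon>0$ there is a normalized block $u$ of $(y_i)$ (still $c_0$-like, with uniformly bounded partial sums) with $\|P_n u\|<\varepsilon$. Iterating, we choose successive normalized blocks $u_1,u_2,\dots$ of $(y_i)$ with $\|P_{n_k}u_k\|<2^{-k}$, where $n_k$ is chosen so that the tail $\|\sum_{j>n_k}2^{-j}\|u_l\|_je_j\|_Y$ is small for $l<k$. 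Hence the sequence $(v_k)$, $v_k=(2^{-j}\|u_k\|_j)_j$, is essentially a disjointly supported block sequence in $Y$ of norm about $1$. Since $(u_k)$ is equivalent to the $c_0$ basis, $\|\sum_{k\le M}u_k\|\le C$. On the other hand, $\|\sum_k u_k\|_j\ge \|u_k\|_j$ coordinatewise (supports are disjoint and the $\|\cdot\|_j$ are lattice norms), so by $1$-unconditionality of $Y$,
\[
\Big\|\sum_{k\le M}u_k\Big\|\ge \Big\|\sum_{k\le M} v_k\Big\|_Y - \text{small}.
\]
Thus $(v_k)$ is a seminormalized block basis of $Y$ with bounded partial sums. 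This contradicts the reflexivity of $Y$, since the basis of $Y$ is boundedly complete. This contradiction finishes the proof. The main obstacle is the $\ell_1$ part, specifically the uniform control of the $Y$-tails; the $c_0$ part follows directly from Lemma~\ref{l41}.
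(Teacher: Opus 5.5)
Your $c_0$ half is correct and is essentially the paper's mechanism written out. The paper says only that strict singularity of every $P_n$ (Lemma~\ref{l41}) yields, inside any infinite-dimensional subspace, a sequence equivalent to a block basis of $Y$. Your gliding-hump choice of normalized blocks $u_k$ with $\|P_{n_k}u_k\|<2^{-k}$ and small tails, followed by the lattice lower estimate and bounded completeness of the basis of $Y$, is exactly how that is done.

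The $\ell_1$ half, however, has a genuine gap, and the steps you sketch would fail. The inference ``$X_j$ contains no $\ell_1$, thus $\ell_1$-averages go to zero in $\|\cdot\|_j$'' is false. Absence of $\ell_1$ (with James's theorem) only makes bounded block sequences weakly null in $X_j$; it does not make their flat averages $\frac1M\sum_k\pm y_{i_k}$ small. For instance, take the maximal families (which satisfy the inclusion condition) and pick $\gamma_n\in\overline S$ with $d(\gamma_m,\gamma_n)=\min(m,n)$. Then for $n_1<\dots<n_k$ one has $\{\gamma_{n_1},\dots,\gamma_{n_k}\}\in\mathcal F_1$ iff $k\le n_1$. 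So the bounded block sequence $(e_{\gamma_n})$ of $X$ spans the Schreier space isometrically in $X_1$, and every flat average of $M$ of its terms has $\|\cdot\|_1$-norm at least $1/2$.

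Your tail step fails for the same reason. Flat averages of a weakly convergent sequence in a reflexive $Y$ need not converge in norm, since $Y$ need not have the Banach--Saks property (e.g.\ Baernstein's space). Nor can a diagonal argument give $\sup_i\|\sum_{j>n}v_i(j)e_j\|_Y\to0$ for a merely weakly convergent $(v_i)$: combined with coordinatewise convergence, that would force norm convergence.

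There are two ways to close the gap.

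(a) Use Mazur convex combinations instead of flat averages, chosen simultaneously for head and tail. Pass to a subsequence with $v_i\to v$ weakly in $Y$, where $v_i(j)=2^{-j}\|y_i\|_j$, and fix $n$ with $\|\sum_{j>n}v(j)e_j\|_Y$ small. Then apply Mazur's theorem to the sequence $(y_i,\dots,y_i,v_i-v)$ in $X_0\oplus\cdots\oplus X_n\oplus Y$. This sequence is weakly null, because bounded block sequences in $X_j$ are weakly null: $X_j$ has an unconditional basis and no $\ell_1$. For the resulting $z=\sum_kc_ky_{i_k}$ you have $2^{-j}\|z\|_j\le\sum_kc_kv_{i_k}(j)$. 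By $1$-unconditionality of $Y$ this gives $\|z\|\le\sum_{j\le n}2^{-j}\|z\|_j+\|\sum_{j>n}v(j)e_j\|_Y+\|\sum_kc_k(v_{i_k}-v)\|_Y$. The right side is small, contradicting the $\ell_1$ lower estimate $\|z\|\ge c$ (no signs are needed). This route shows $X$ never contains $\ell_1$, without Lemma~\ref{l41} or the inclusion condition, which is then needed only against $c_0$.

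(b) More economically, reuse your $c_0$ blocks. Let $\tilde v_k$ be the restriction of $v_k$ to $(n_k,n_{k+1}]$. The triangle inequality $\|\sum_ka_ku_k\|_j\le\sum_k|a_k|\|u_k\|_j$ gives $\|\sum_ka_ku_k\|\le\|\sum_k|a_k|v_k\|_Y\le C\|\sum_ka_k\tilde v_k\|_Y$, because $\sum_k\|v_k-\tilde v_k\|_Y$ is small and $\|\tilde v_k\|_Y\ge 1/2$. Together with your lower estimate, $(u_k)$ is equivalent to the block basis $(\tilde v_k)$ of the reflexive space $Y$. This rules out $\ell_1$ and $c_0$ simultaneously, and it is precisely the paper's proof.
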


\begin{proof}
Let $Z$ be any infinite-dimensional subspace of $X$. Then, since by Lemma \ref{l41} each $P_n$ is strictly singular, it follows that $Z$ contains a basic sequence that is equivalent to a block basis of $Y$. Hence $Z$ contains a reflexive subspace. In particular, $Z$ is not isomorphic to $c_0$ of $\ell_1$. Since $X$ has an unconditional basis, it follows from James's theorem  \cite{J1} that $X$ is reflexive.
\end{proof}

\begin{corollary}\label{cor43}
Let $S$ be finite. If $(\mathcal{F}_n)$ satisfy the inclusion condition, in particular if $(\mathcal{F}_n)$ are maximal, then $X$ admits an equivalent $1$-unconditional $2R$ norm.
\end{corollary}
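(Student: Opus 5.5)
This corollary follows by chaining Theorem \ref{t42} with the renorming theorem of Section \ref{sec:2Rrenorm}; what needs care is the $1$-unconditionality of $\|\cdot\|_*$, which that theorem does not state explicitly.

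First, the remark following the definition of the inclusion condition shows that the maximal families satisfy the inclusion condition when $S$ is finite. So it suffices to treat families satisfying the inclusion condition. By Theorem \ref{t42}, $(X,\|\cdot\|)$ is then reflexive. Here $Y$ carries its $1$-unconditional $2R$ renorming from Theorem \ref{t22}; since that norm is equivalent to the original one, $Y$ is still reflexive and the hypotheses of Section \ref{sec:Refl} are met. The implication $(1)\Rightarrow(2)$ of the theorem in Section \ref{sec:2Rrenorm} now shows that $\|\cdot\|_*$ is an equivalent $2R$ norm on $X$.

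It remains to check that $\|\cdot\|_*$ is $1$-unconditional with respect to the unit vectors $(1_{\{\gamma\}})_{\gamma\in\overline S}$. Each summand depends only on $(|x_\gamma|)_\gamma$ and is monotone in these absolute values, so I check the four pieces in turn:
\begin{itemize}
\item $\|x\|_j=\sup_{F\in\mathcal F_j}\Theta_F(x)$ depends only on $|x_\gamma|$ and is nondecreasing in each $|x_\gamma|$.
\item The $1$-unconditionality of $\|\cdot\|_Y$ makes $\|\sum_j 2^{-j}a_je_j\|_Y$ nondecreasing in each $|a_j|$. Hence $\|x\|$ is invariant under sign changes and nonincreasing under coordinate projections.
\item The Day norm depends only on the nonincreasing rearrangement of $(|x_\gamma|)$, and each $a_n^*$ can only decrease when coordinates are zeroed out.
\item Each $|\!|\!|x|\!|\!|_j$ is a supremum of Day norms of the vectors $(\Theta_{F_i}(x))_i$. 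The entries $\Theta_{F_i}(x)$ are invariant under sign changes and nonincreasing under projections, and the Day norm is monotone in the absolute values of its entries.
\end{itemize}
Combining these, $\|\sum_\gamma \pm x_\gamma 1_{\{\gamma\}}\|_*=\|x\|_*$ and $\|x\cdot 1_A\|_*\le\|x\|_*$ for every $A\subseteq\overline S$. This is $1$-unconditionality of the (uncountable) basis.

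The only step requiring care is the monotonicity of the Day norm under decreasing entries. It follows because decreasing one $|a_n|$ decreases every term of the nonincreasing rearrangement weakly: $a_k^*=\inf_{|B|<k}\sup_{n\notin B}|a_n|$. With that in hand, the corollary is a direct combination of the earlier results and needs no new estimates.
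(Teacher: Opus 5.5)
Your proposal is correct and follows the paper's intended argument. The paper gives no separate proof of this corollary: it combines the remark that maximal families satisfy the inclusion condition, Theorem~\ref{t42} for reflexivity, and the implication $(1)\Rightarrow(2)$ of the theorem in Section~\ref{sec:2Rrenorm}, all with $Y$ carrying its $1$-unconditional $2R$ renorming, which is exactly your chain. Your explicit check that every summand of $\|\cdot\|_*$ is sign-invariant and monotone under coordinate projections, so that $(1_{\{\gamma\}})_{\gamma\in\overline S}$ is a $1$-unconditional basis, fills in a detail the paper leaves unstated.
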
 
\begin{remark} It can be shown that the families used in \cite{AM} do \textit{not} satisfy the inclusion condition and yet the space $X$ is reflexive \cite{AM}.
Hence the inclusion condition is not a necessary condition for reflexivity.
\end{remark}
\begin{proposition}\label{prop44}
If $S$ is infinite then $X$ contains $c_0$.
\end{proposition}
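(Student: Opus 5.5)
We exhibit an explicit sequence of disjointly supported points of $\overline S$ that are pairwise ``far apart at level $1$'', so that no two of them ever lie together in any $\mathcal{F}_j$. Their unit vectors then span $c_0$ isometrically in every $\|\cdot\|_j$, hence isomorphically in $X$.

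\emph{Choice of the points.} Since $S$ is infinite, choose distinct elements $s_1,s_2,\dots\in S$ and fix any $s\in S$. Define $\gamma_i\in\overline S$ by $\gamma_i=(s_i,s,s,\dots)$. For $i\ne k$ we have $\gamma_i(1)\ne\gamma_k(1)$, so $d(\gamma_i,\gamma_k)=1$. Hence for any finite $A\subseteq\{\gamma_i\colon i\in\mathbb{N}\}$ with $|A|\ge2$ we get $A^\sharp=1$.

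\emph{Key combinatorial step.} We show by induction on $j$ that every $F\in\mathcal{F}_j$ satisfies $|F\cap\Gamma_0|\le1$, where $\Gamma_0=\{\gamma_i\colon i\in\mathbb{N}\}$. For $j=0$ this is trivial. Suppose it holds for $\mathcal{F}_{j-1}$, and let $F\in\mathcal{F}_j$ with $|F\cap\Gamma_0|\ge2$. By heredity, $A:=F\cap\Gamma_0\in\mathcal{F}_j$ and $A^\sharp=1$. The successor-step definition then gives $A=A_1$ with $A_1\in\mathcal{F}_{j-1}$, which contradicts the inductive hypothesis. (This is the same mechanism as in the Remark in Section~\ref{sec:2Rrenorm}, where $d(\gamma_0,\gamma_n)=1$ forced $\{\gamma_0,\gamma_n\}\notin\mathcal{F}_j$.)

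\emph{Norm computation.} For finitely many scalars $(a_i)$, the combinatorial step gives, for every $j\ge0$,
\[
\Big\|\sum_i a_i 1_{\{\gamma_i\}}\Big\|_j=\sup_{F\in\mathcal{F}_j}\sum_{\gamma_i\in F}|a_i|=\max_i|a_i|.
\]
Consequently
\[
\Big\|\sum_i a_i1_{\{\gamma_i\}}\Big\|=\max_i|a_i|\,\Big\|\sum_{j=0}^\infty 2^{-j}e_j\Big\|_Y,
\]
and the constant $C=\|\sum_j2^{-j}e_j\|_Y$ is finite because $(e_j)$ is normalized, so $C\le2$. Thus $(1_{\{\gamma_i\}})_{i=1}^\infty$ is, up to the constant $C$, isometrically equivalent to the unit vector basis of $c_0$, and $X$ contains $c_0$.

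The only point requiring care is the combinatorial induction. In particular, it must be applied to the hereditary intersection $F\cap\Gamma_0$ rather than to $F$ itself, because $F^\sharp$ is determined by all of $F$ and could otherwise mislead. Everything else is a direct computation.
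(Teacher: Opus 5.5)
Your proof is correct and is essentially the paper's argument: the paper takes $\gamma_s=(s,s,s,\dots)$ for $s\in S$, which pairwise differ in the first coordinate, and simply asserts that $\{1_{\gamma_s}\colon s\in S\}$ is equivalent to the unit vector basis of $c_0(S)$, while your induction on $j$ supplies the justification the paper leaves implicit. (Passing to $F\cap\Gamma_0$ is harmless but not needed: $F^\sharp$ is a minimum, so it already equals $1$ whenever $F$ contains two of the $\gamma_i$.)
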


\begin{proof}
For each $s\in S$, set $\gamma_s=(s,s,s,\dots)$. Then $\{1_{\gamma_s}\colon s\in S\}\subseteq X$ is equivalent to the u.v.b. of $c_0(S)$.
\end{proof}


\begin{thebibliography}{99}
\bibitem{AA} Dale E. Alspach and Spiros Argyros, \textit{Complexity of weakly null sequences}, Diss. Math. \textbf{321} (1992), 1--44.

\bibitem {AM} Spiros A. Argyros and Pavlos Motakis, \textit{$\alpha$-Large families and applications to Banach space theory}, Topology and its Applications \textbf{172} (2014), 47--67.

\bibitem{B} Albert Baernstein II, \textit{On reflexivity and summability}, Studia Math. \textbf{42} (1972), 91--94.


\bibitem{Beau} Bernard Beauzamy, \textit{Espaces d'interpolation r\'eels, topologie et g\'eom\'etrie}. Lecture Notes in Mathematics, vol. 666, Springer-
Verlag, Berlin-New York,1978.

\bibitem{BS} Y. Benyamini and T. Starbird, \textit{Embedding weakly compact sets into Hilbert space}, Israel J. Math. \textbf{23} (1970), 137--141.

\bibitem{DFJP} W. J. Davis, T. Figiel, W. B. Johnson, and A. Pe\l czy\'nski, \textit{Factoring Weakly Compact Operators}, J. Funct. Anal. {\bf 17} (1974), 311--327.

\bibitem{D} Mahlon M. Day, \textit{Strict convexity and smoothness},Trans. Amer. Math. Soc. {\bf 78} (1955), 516--528.
%
\bibitem{DGZ} Robert Deville, Gilles Godefroy, V\'aclav Zizler, \textit{Smoothness and renormings in Banach spaces},
Monographs and Surveys in Pure and Applied Mathematics, Vol. 64, Pitman, London, 1993.

\bibitem{DK} Stephen Dilworth and Denka Kutzarova, \textit{$2$-rotund norms for Baernstein spaces and their duals}, Pure and Applied Functional Analysis, {\bf 10} (2025), no. 5, 1229--1244.

\bibitem{DKM} Stephen Dilworth, Denka Kutzarova, and Pavlos Motakis,
 \textit{$2$-rotund norms for unconditional and symmetric sequence spaces},
 Banach J. Math. Anal., {\bf 18} (2024), no. 4, Paper No. 74, 17 pp.

\bibitem{FJ} T. Figiel and W. B. Johnson, \textit{A uniformly convex Banach space which contains no $\ell_p$}, Compositio Math. 29 (1974), 179--190.

\bibitem{G} Gilles Godefroy, \textit{Renormings of Banach spaces},  Handbook of the geometry of Banach spaces, Vol. I, 781--835, North Holland Publishing Co., Amsterdam, 2001.

\bibitem{HJ}  Petr H\'ajek and Michal Johanis, \textit{Characterization of reflexivity by equivalent  renorming}, J. Funct. Anal. \textbf{211} (2004), 163--172.

 \bibitem{J1}  R.C. James,\textit{ Bases and reflexivity of Banach spaces}, Ann. Math. \textbf{52} (1950), 518--527.

\bibitem{J} Robert C. James, \textit{Reflexivity and the sup of linear functionals}, Israel J. Math. \textbf{13} (1972), 289--300.

\bibitem{KT}  D.N. Kutzarova and S.L.Troyanski, \textit{Reflexive Banach spaces without equivalent norms that are uniformly convex or uniformly differentiable in every direction},  Studia Math. \textbf{72} (1982), 91--95.

\bibitem{LT1}   Joram Lindenstrauss and Lior Tzafriri, \textit{Classical Banach spaces. I}, Springer-Verlag, Berlin-New York, 1977.

\bibitem{LAT} J. L\'opez-Abad, S. Todorcecic, \textit{Positional graphs and conditional structure of weakly null sequences}, Adv. Math. {\bf 242} (2013), 163--186.

\bibitem{M} V.D. Milman, \textit{Geometric theory of Banach spaces. II: Geometry of the unit sphere}, Uspehi Mat. Nauk \textbf{26} (1971), 73--149.

\bibitem{OS} E. Odell and Th. Schlumprecht, \textit{Asymptotic properties of Banach spaces under renormings}, J. Amer. Math. Soc. \textbf{11} (1998), 175--188.

\bibitem{S} J. Schreier, \textit{Ein Gegenbeispiel zur Theorie der schwachen Konvergenz}, Stud. Math. {\bf 2} (1930), 58--62. 

\bibitem{T} B. S. Tsirelson, \textit{Not every Banach space contains an imbedding of $\ell_p$ or $c_0$}, Funct. Anal. Appl. {\bf 8} (1974), 138--141 (translated from Russian).
\end{thebibliography}
\end{document}